\theoremstyle{definition} 
\newtheorem{Unity}{Unity}[section] 
\newtheorem*{Definition*}{Definition} 
\newtheorem{Definition}[Unity]{Definition} 
\theoremstyle{plain} 
\newtheorem*{Theorem*}{Theorem}
\newtheorem{Theorem}[Unity]{Theorem}
\newtheorem{Proposition}[Unity]{Proposition}
\newtheorem{Corollary}[Unity]{Corollary}
\newtheorem{Lemma}[Unity]{Lemma}
\newtheorem*{Conjecture*}{Conjecture}
\theoremstyle{remark} 
\newtheorem*{Remark*}{Remark}
\newtheorem{Remark}[Unity]{Remark}
\numberwithin{Unity}{section}
\newcommand{\Z}{\mathbb{Z}}
\newcommand{\rk}{\mathrm{rk}}
\newcommand{\im}{\mathrm{Im}}
\newcommand{\Omg}{\mathrm{\Omega}}
\newcommand{\Ox}{\mathscr{O}}
\begin{document}

\title{On a Conjecture of Lan-Sheng-Zuo on Semistable Higgs Bundles: Rank $3$ Case}
\author{Lingguang Li}
\address{Department of Mathematics, Tongji University, Shanghai, P. R. China\\
School of Mathematical Sciences, Fudan University, Shanghai, P. R. China}
\email{LG.Lee@amss.ac.cn}
\thanks{This work is supported by the SFB/TR 45 'Periods, Moduli Spaces and Arithmetic of Algebraic Varieties' of the DFG, and partially supported by the National Natural Science Foundation (No. 11271275).}
\footnotetext[1]{Very recently, A. Langer \cite{Langer13i} and independently Lan-Sheng-Yang-Zuo \cite{LanShengZuo13i} has proven the conjecture for ranks less than or equal to $p$ case.}
\begin{abstract}
Let $X$ be a smooth projective curve of genus $g$ over an algebraically closed field $k$ of characteristic $p>2$. We prove that any rank $3$ nilpotent semistable Higgs bundle $(E,\theta)$ on $X$ is a strongly semistable Higgs bundle. This gives a partially affirmative answer to a conjecture of Lan-Sheng-Zuo \cite{LanShengZuo12ii}\footnotemark[1]. In addition, we prove a tensor product theorem for strongly semistable Higgs bundles with $p$ satisfying some bounds (Theorem \ref{TensorTheorem}). From this we reprove a tensor theorem for semistable Higgs bundles on the condition that the Lan-Sheng-Zuo conjecture holds (Corollary \ref{TensorStableBundle}).
\end{abstract}
\maketitle

\section{Introduction}

N. Hitchin \cite{Hitchin87}, K. Corlette \cite{Corlette88}\cite{Corlette93} and C. Simpson \cite{Simpson92}\cite{Simpson97} have established a correspondence between semistable Higgs bundles and representations of the fundamental groups on arbitrary dimensional complex projective manifolds. In order to establish an analogous correspondence in positive characteristic, Lan-Sheng-Zuo \cite{LanShengZuo12ii} introduced intermediate notions \emph{strongly semistable Higgs bundles} and \emph{quasi-periodic Higgs bundles} between semistable Higgs bundles and representations of algebraic fundamental groups, where strongly semistable Higgs bundles is a generalization of the notion of strongly semistable vector bundles in the sense of Lange-Stuhler \cite{LangeStuhler77}. They constructed a functor from the category of strongly semistable Higgs bundles with trivial Chern classes to the category of crystalline representations of fundamental groups, and showed that any rank $2$ nilpotent semistable Higgs bundle is a strongly semistable Higgs bundle (cf. \cite{LanShengZuo12ii}). In addition, they made the following conjecture.
\begin{Conjecture*}\label{Conjecture}
Any nilpotent semistable Higgs bundle of exponent less than the characteristic of base field is a strongly semistable Higgs bundle.
\end{Conjecture*}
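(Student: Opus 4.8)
The plan is to prove the conjecture in arbitrary rank by reducing it to a single one-step stability-preservation statement for the Higgs--de Rham flow and then attacking that statement directly. Recall that for a nilpotent Higgs bundle $(E,\theta)$ of nilpotency exponent $<p$, the hypothesis on the exponent is exactly what makes the inverse Cartier transform $C^{-1}$ of Ogus--Vologodsky well defined: it produces a flat bundle $(H,\nabla)=C^{-1}(E,\theta)$ with nilpotent $p$-curvature, equipped with a canonical Hodge (Simpson) filtration, and passing to the associated graded with respect to this filtration yields a new Higgs bundle $(E_1,\theta_1)=\mathrm{gr}\,C^{-1}(E,\theta)$. One must check along the way that $\theta_1$ is again nilpotent of exponent $<p$, so that the transform remains applicable and the flow can be iterated; the exponent of $\theta_1$ is the length of the transversality ladder of the Hodge filtration, which is controlled by the exponent of $\theta$. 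Since strong semistability is by definition the semistability of every iterate $(E_n,\theta_n)$, it suffices to prove the one-step assertion: if $(E,\theta)$ is nilpotent semistable of exponent $<p$, then $(E_1,\theta_1)$ is again semistable. Granting this, semistability propagates along the flow by induction and strong semistability follows.

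For the one-step assertion I would first dispose of the transform $C^{-1}$ itself. Because $C^{-1}$ is an equivalence between nilpotent Higgs bundles of exponent $<p$ and flat bundles with nilpotent $p$-curvature of exponent $<p$, it carries sub-Higgs-sheaves to $\nabla$-invariant subsheaves by functoriality, and since it is built on a Frobenius pullback it rescales the degrees of all subobjects uniformly; hence it preserves the relative ordering of slopes. Consequently $(E,\theta)$ Higgs-semistable forces $(H,\nabla)$ to be semistable as a flat bundle. This step is essentially formal and I do not expect difficulty here.

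The crux, and the only genuine obstacle, is the grading step $\mathrm{gr}$: semistability of $(H,\nabla)$ as a flat bundle does \emph{not} automatically descend to semistability of $(E_1,\theta_1)=\mathrm{gr}(H,\nabla)$ as a Higgs bundle. The difficulty is that the Hodge filtration on $(H,\nabla)$ is only Griffiths-transverse, not $\nabla$-stable, so a destabilizing $\theta_1$-invariant subsheaf $F\subset E_1$ cannot simply be lifted to a $\nabla$-invariant subsheaf of $H$ that would contradict semistability upstream. My plan is to argue by contradiction with the Harder--Narasimhan filtration of $(E_1,\theta_1)$: take the maximal destabilizing Higgs subsheaf $F$, use that it is saturated and $\theta_1$-stable to reconstruct a Griffiths-transverse filtered subsheaf $\widetilde F\subset H$ with $\mathrm{gr}\,\widetilde F\supseteq F$, and then measure the failure of $\widetilde F$ to be $\nabla$-invariant by the induced Higgs field on the quotient. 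The inequality to extract is that this failure is bounded in terms of the nilpotency exponent, so that the slope excess of $F$ over $E_1$ is constrained by a quantity that the exponent-$<p$ hypothesis is meant to annihilate.

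Because a purely one-step comparison of this kind is what breaks down in large rank -- the number of Hodge graded pieces, and hence the length of the transversality ladder, grows with the rank, while the rank $2$ and rank $3$ verifications proceed by an explicit enumeration of Hodge filtration types that does not survive -- I expect the honest general argument to combine this local analysis with a global boundedness input in the spirit of Lange--Stuhler and Langer. The family of nilpotent semistable Higgs bundles of fixed rank, degree and exponent is bounded, so the flow is eventually periodic (the quasi-periodic phenomenon already singled out in the introduction); one then reduces strong semistability to the assertion that semistability cannot first fail within the pre-periodic transient. The main technical obstacle, which I do not expect to be routine, is precisely making the Griffiths-transversality estimate of the previous paragraph uniform in the rank, since that is the only point where the rank enters essentially and where the low-rank enumerations give no guidance.
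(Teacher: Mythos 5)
There is a genuine gap, and in fact a mismatch of scope: the statement you are addressing is the Lan--Sheng--Zuo conjecture, which this paper does not prove in general. The paper proves only the rank $3$ case (Theorem \ref{Thm:StronglySemistableHiggsBundle}, via Propositions \ref{Prop:S} and \ref{Prop:Q}), and your proposal, which aims at arbitrary rank, does not close the argument either. The pivotal step in your plan --- the ``Griffiths-transversality estimate uniform in the rank'' bounding the slope excess of a destabilizing Higgs subsheaf of $\mathrm{gr}(H,\nabla)$ --- is stated as an expectation and never established; you yourself flag it as the main obstacle. A second concrete error: the inverse Cartier transform does \emph{not} come ``equipped with a canonical Hodge (Simpson) filtration.'' $C^{-1}$ outputs only the flat bundle $(H,\nabla)$; the choice of a Griffiths-transverse filtration whose associated graded is Higgs-semistable is additional data, and producing it is precisely the content of the conjecture. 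This is exactly where the paper does its real work in rank $3$: if $H$ is semistable one takes the trivial filtration, and otherwise one builds the filtration by hand out of the Harder--Narasimhan filtration of $H$ (cases $(i)$/$(ii)$ of Theorem \ref{Thm:StronglySemistableHiggsBundle}, with the subcases governed by comparing $\mu(Q'/\widetilde{S})$, resp.\ $\mu(K)$, with $\mu(H)$), the only input from $\nabla$-semistability being Lemma \ref{Lem:Higgs-DeRham}. Your correct observations --- that the problem reduces to one-step preservation of semistability plus iteration, and that $(H,\nabla)$ is $\nabla$-semistable by Ogus--Vologodsky --- reproduce the paper's skeleton, but the skeleton is the easy part.

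Two further points would break your plan even if the estimate were supplied. First, your exponent bookkeeping is wrong: the exponent of the new Higgs field $\theta_1$ on a graded is bounded by the number of graded pieces minus one, i.e.\ by $\rk(E)-1$, not by the exponent of $\theta$; for $\rk(E)>p$ the hypothesis ``exponent $<p$'' need not persist along the flow, so $C^{-1}$ may cease to be applicable --- this is precisely why the results of Langer and Lan--Sheng--Yang--Zuo cited in the footnote are restricted to rank $\leq p$. (In the paper's rank $3$ case this is harmless: the grading is a system of Hodge bundles of rank $3$, hence of exponent $\leq 2<p$ for $p>2$.) Second, your appeal to boundedness and eventual periodicity is circular: quasi-periodicity of the flow is available only after one knows the Higgs terms remain in the bounded family of semistable bundles, which is the statement under proof; moreover that mechanism lives over $\overline{\mathbb{F}}_p$ (as in Lemma \ref{Stongly-Quasiperiodic} and Theorem \ref{TensorTheorem}), whereas the conjecture is posed over an arbitrary algebraically closed field.
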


In this paper, we show that any rank $3$ nilpotent semistable Higgs bundle is a strongly semistable Higgs bundle. This gives an affirmative answer to the conjecture of Lan-Sheng-Zuo \cite{LanShengZuo12ii} in the rank $3$ case.
\begin{Theorem}[Theorem \ref{Thm:StronglySemistableHiggsBundle}]
Let $k$ be an algebraically closed field of characteristic $p>2$, $X$ a smooth projective curve over $k$. Let $(E,\theta)$ be a rank $3$ nilpotent semistable Higgs bundle on $X$. Then $(E,\theta)$ is a strongly semistable Higgs bundle.
\end{Theorem}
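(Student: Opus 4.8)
The plan is to run the Higgs--de Rham flow of Lan--Sheng--Zuo and to show that, in rank $3$ with $p>2$, semistability is preserved at every stage. Recall that strong semistability of a nilpotent Higgs bundle $(E,\theta)$ means that each term of the flow
$$(E_0,\theta_0)=(E,\theta)\xrightarrow{\ C^{-1}\ }(V_0,\nabla_0)\xrightarrow{\ \mathrm{gr}\ }(E_1,\theta_1)\xrightarrow{\ C^{-1}\ }(V_1,\nabla_1)\xrightarrow{\ \mathrm{gr}\ }\cdots$$
is a semistable Higgs bundle, where $C^{-1}$ is the inverse Cartier transform and $\mathrm{gr}$ is the grading of the flat bundle with respect to its Hodge filtration. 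Both operations preserve rank and multiply degree by $p$, hence multiply every slope by $p$; so the quantity that must be controlled is the sign of the normalized instability $\mu(F)-\mu(E)$ of a subsheaf, which I track throughout. It then suffices, by induction, to prove the single-step statement: if $(E,\theta)$ is a semistable rank $3$ Higgs bundle, then $(E_1,\theta_1)=\mathrm{gr}\,C^{-1}(E,\theta)$ is again semistable.

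The main device is the equivalence, under $C^{-1}$, between $\theta$-invariant subsheaves of $(E,\theta)$ and $\nabla_0$-invariant (Griffiths-transverse) subsheaves of $(V_0,\nabla_0)$, together with the degree relation $\deg C^{-1}(E,\theta)=p\,\deg E$. Any $\nabla_0$-invariant subsheaf $W\subset V_0$ comes from a $\theta$-invariant $(F,\theta|_F)\subset(E,\theta)$ with $\mathrm{rk}\,W=\mathrm{rk}\,F$ and $\deg W=p\,\deg F$; since $\mathrm{gr}$ preserves rank and degree, the resulting graded subsheaf of $E_1$ has slope $p\,\mu(F)\le p\,\mu(E)=\mu(E_1)$ and is therefore not destabilizing. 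The difficulty is that a destabilizing $\theta_1$-invariant subsheaf $F_1\subset E_1$ need not be the grading of a $\nabla_0$-invariant subsheaf of $V_0$. Transporting $F_1$ back through $\mathrm{gr}$ and $C^{-1}$ to a $\theta$-invariant subsheaf of $(E,\theta)$ therefore introduces an error term, bounded in terms of $\deg\Omega^1_X$ and the length of the Hodge filtration, measuring the failure of $F_1$ to respect the filtration. The estimate takes the schematic form $\mu(F_1)-\mu(E_1)\le p\bigl(\mu(F)-\mu(E)\bigr)+(\text{error})$.

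The heart of the argument is a finite case analysis organized by the nilpotent type of $\theta$ --- namely $\theta=0$, a single size-$2$ Jordan block together with a trivial summand, or a single size-$3$ Jordan block --- and by the rank of the putative destabilizing subsheaf. In rank $3$ one need only treat destabilizing subsheaves of rank $1$ or $2$, and the Hodge filtration of $(V_0,\nabla_0)$ has at most three steps, so the admissible numerical types are very limited. In each case I would use semistability of $(E,\theta)$ to bound $\mu(F)$, insert this into the degree estimate, and show that the hypothesis $\mu(F_1)>\mu(E_1)$ forces a $\theta$-invariant subsheaf of $(E,\theta)$ of slope exceeding $\mu(E)$, contradicting semistability. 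The condition $p>2$ enters both to keep the nilpotency index of $\theta$ (at most $3$ in rank $3$) within the range of definition of $C^{-1}$, and to make the error term strictly smaller than the gap forced by a genuine destabilization.

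I expect the main obstacle to be the full-nilpotent case, in which $\theta$ is a single Jordan block of length $3$ and the Hodge filtration has three distinct steps: here the error term in the degree comparison is largest, and the boundary case $p=3$ (where the nilpotency index and the rank both equal $p$) is the tightest. Controlling it will require a careful analysis of how $\mathrm{gr}$ interacts with the unique maximal $\theta$-invariant flag, showing that any rank $1$ or rank $2$ graded subsheaf of $(E_1,\theta_1)$ either respects the Hodge filtration --- so that its slope is bounded directly by semistability of $(E,\theta)$ via the faithful transport above --- or else violates Griffiths transversality. Once this case is settled, the remaining configurations follow by the same mechanism with strictly more slack, and the induction closes to give strong semistability.
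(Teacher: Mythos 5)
There is a genuine gap, and it sits at the center of your plan: you treat the Hodge filtration on $(V_0,\nabla_0)=C^{-1}(E,\theta)$ as if it were canonically attached to the flat bundle, and reduce the theorem to the ``single-step statement'' that $\mathrm{gr}\,C^{-1}(E,\theta)$ is semistable. But a flat bundle carries no preferred Hodge filtration; a Higgs--de Rham sequence requires a \emph{choice} of Griffiths-transverse filtration at every stage, and the entire content of the theorem is that some choice makes the graded Higgs bundle semistable. For a bad choice the statement you are trying to prove is simply false: take the trivial one-step filtration, whose graded is $(H,0)$ with zero Higgs field; every subsheaf is then $\theta$-invariant, so the graded is Higgs-unstable whenever $H=C^{-1}(E,\theta)$ is unstable as a vector bundle, which certainly occurs. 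Consequently no estimate of the form $\mu(F_1)-\mu(E_1)\le p\bigl(\mu(F)-\mu(E)\bigr)+(\text{error})$ with a filtration-independent error can close the argument, and your ``error term'' is never made concrete. A related misdirection: organizing the case analysis by the Jordan type of $\theta$ does not see the relevant data, which is the Harder--Narasimhan polygon of $H$.

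What the paper actually does is construct the filtration out of the instability of $H$. Since $(H,\nabla)$ is $\nabla$-semistable (by Ogus--Vologodsky) but possibly unstable as a bundle, in rank $3$ its Harder--Narasimhan filtration forces one of two shapes: a rank $1$ maximal destabilizing subbundle $S$ with $\mu_{max}(H/S)\le\mu(H)$, or a rank $1$ minimal destabilizing quotient $Q$ with $\mu_{min}(\ker(H\twoheadrightarrow Q))\ge\mu(H)$. In each shape one builds a two- or three-step Hodge filtration from $S$ (resp. $\ker(H\twoheadrightarrow Q)$) together with the saturated image (resp. the kernel) of the second fundamental form induced by $\nabla$, and then verifies semistability of the graded Higgs bundle by checking rank $1$ and rank $2$ Higgs subsheaves directly, the only input being that any subsheaf of $H$ of slope greater than $\mu(H)$ has nontrivial second fundamental form (Lemma \ref{Lem:Higgs-DeRham}). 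The resulting graded is a system of Hodge bundles of rank $3$, hence nilpotent of exponent $\le 2\le p-1$, so the construction iterates. If you want to salvage your approach, this is the missing idea: the filtration must be chosen adapted to the Harder--Narasimhan filtration of $H$ at each step, not fixed in advance and corrected by an error estimate.
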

In fact, the theorem above is also true in higher dimension since the proof is valid in the higher dimension case. With the results of Lan-Sheng-Zuo \cite{LanShengZuo12ii}, the theorem above implies that we can construct crystalline representations of fundamental groups associated to rank $3$ nilpotent semistable Higgs bundles with trivial Chern classes.

By the equivalence of strongly semistable Higgs bundles and quasi-periodic Higgs bundles over an algebraic closure of finite fields, we can prove the following tensor product theorem for strongly semistable Higgs bundles with trivial Chern classes.

\begin{Theorem}[Theorem \ref{TensorTheorem}]
Let $k$ be the algebraic closure of finite fields of characteristic $p>0$, and $X$ a smooth projective curve over $k$. Let $(E_1,\theta_1)$ and $(E_2,\theta_2)$ be strongly semistable Higgs bundles on $X$ with degree $0$. Suppose that $\rk(E_1)+\rk(E_2)\leq p+1$. Then the tensor product $(E_1\otimes E_2,\theta_1\otimes 1+1\otimes \theta_2)$ is also a strongly semistable Higgs bundle.
\end{Theorem}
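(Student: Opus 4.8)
The plan is to reduce everything to the equivalence, valid over $k=\overline{\mathbb{F}}_p$, between degree-$0$ strongly semistable (nilpotent) Higgs bundles and quasi-periodic Higgs bundles in the sense of Lan--Sheng--Zuo \cite{LanShengZuo12ii}, and then to show that the Higgs--de Rham flow underlying quasi-periodicity is compatible with tensor products. By that equivalence it suffices to prove that $(E_1\otimes E_2,\theta_1\otimes 1+1\otimes\theta_2)$ is quasi-periodic. Recall that each $(E_i,\theta_i)$ being quasi-periodic means that its Higgs--de Rham flow is eventually cyclic: one applies the inverse Cartier transform $C^{-1}$ to obtain a de Rham bundle, chooses a Griffiths-transverse (Hodge-type) filtration, and grades to return a Higgs bundle, and after finitely many iterations this process repeats periodically.

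The heart of the argument is the monoidality of every operation in the flow. First, the inverse Cartier transform is multiplicative, $C^{-1}\big((E_1,\theta_1)\otimes(E_2,\theta_2)\big)\cong C^{-1}(E_1,\theta_1)\otimes C^{-1}(E_2,\theta_2)$; second, the convolution of two Griffiths-transverse filtrations is Griffiths-transverse for the tensor-product connection; and third, the associated-graded functor is monoidal, $\mathrm{Gr}(M_1\otimes M_2)\cong \mathrm{Gr}(M_1)\otimes \mathrm{Gr}(M_2)$. Granting these, if at each stage of the flow for the tensor product one selects precisely the convolution of the filtrations used in the two individual flows, then the flow of $(E_1\otimes E_2,\theta_1\otimes 1+1\otimes\theta_2)$ is the term-by-term tensor product of the flows of $(E_1,\theta_1)$ and $(E_2,\theta_2)$. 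Since the latter two are eventually cyclic, their term-by-term tensor product is eventually cyclic as well, with period dividing a common multiple of the two individual periods; hence the tensor product is quasi-periodic, and therefore strongly semistable.

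The step I expect to be the main obstacle --- and the one forcing the hypothesis $\rk(E_1)+\rk(E_2)\le p+1$ --- is checking that the tensor flow never leaves the range in which the inverse Cartier transform and grading are defined. The transform $C^{-1}$ applies only to nilpotent Higgs fields of nilpotency degree $<p$, i.e.\ of exponent $\le p$, and correspondingly the filtrations appearing in the flow must have length $<p$. If $\theta_i$ has exponent $e_i\le \rk(E_i)$, then $\theta_1\otimes 1+1\otimes\theta_2$ has exponent at most $e_1+e_2-1\le \rk(E_1)+\rk(E_2)-1\le p$, and the convolution filtration at each stage has length at most $(\rk(E_1)-1)+(\rk(E_2)-1)\le p-1$; both bounds are exactly what the transform requires. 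Verifying this uniformly along the flow, together with checking that the Frobenius-descent data of the two cyclic flows are compatible under tensoring so that the tensor flow genuinely closes up (rather than merely remaining semistable), is where the monoidality statements above must be used in their full, filtration-compatible form.
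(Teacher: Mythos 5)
Your proposal is correct and follows essentially the same route as the paper: reduce to quasi-periodicity via the equivalence with strong semistability for degree-$0$ bundles over $\overline{\mathbb{F}}_p$, use the monoidality of the inverse Cartier transform, the convolution (tensor) filtration, and the associated-graded functor to identify the Higgs--de Rham flow of the tensor product with the term-by-term tensor product of the two flows, and conclude eventual periodicity from that of the factors. Your accounting of where the bound $\rk(E_1)+\rk(E_2)\le p+1$ enters (keeping the exponent of the tensor Higgs field, and of every term along the flow, within the range where $C^{-1}$ is defined) matches the paper's opening observation that the exponent of a nilpotent Higgs bundle is less than its rank.
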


Professor Kang Zuo explained to me that the idea in the proof is a flavor of a characteristic $p$ analog to the complex number field case. Over the complex number field, S. K. Donaldson, N. Hitchin and C. Simpson showed that a Higgs bundle is poly-stable if and only if it carries a Yang-Mills-Higgs metric (for semistable Higgs bundles one has the so-called approximated Yang-Mills-Higgs metric). If the degree of this vector bundle is zero, then one shows that the tensor product of Yang-Mills-Higgs metrics is again a Yang-Mills-Higgs metric. In this way, one proves the above theorem over the complex number field. For the characteristic $p$ case, the solution of Lan-Sheng-Zuo conjecture just means that any semistable Higgs bundle in characteristic $p$ of degree zero carries a characteristic $p$ Yang-Mills-Higgs metric. The above theorem can be interpreted as the tensor product of Yang-Mills-Higgs metrics in characteristic $p$ is again a Yang-Mills-Higgs metric in characteristic $p$.

If the Lan-Sheng-Zuo conjecture is true, one can reprove a tensor theorem for semistable Higgs bundles (cf. \cite[Theorem 8.16]{BalajiParameswaran11}).

\begin{Corollary}[Corollary \ref{TensorStableBundle}]
Suppose that Lan-Sheng-Zuo conjecture is true, i.e. any nilpotent semistable Higgs bundle of exponent less than the characteristic of base field is strongly Higgs semistable. If $(E_1,\theta_1)$ and $(E_2,\theta_2)$ are nilpotent semistable Higgs bundles on $X$ with degree $0$ and $\rk(E_1)+\rk(E_2)\leq p+1$, then the tensor product $(E_1\otimes E_2,\theta_1\otimes 1+1\otimes \theta_2)$ is also Higgs semistable.
\end{Corollary}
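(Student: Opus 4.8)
The plan is to derive Corollary \ref{TensorStableBundle} from Theorem \ref{TensorTheorem} rather than proving it from scratch. The corollary is essentially a formal consequence: it says that \emph{if} the Lan-Sheng-Zuo conjecture holds, then the tensor product theorem for strongly semistable Higgs bundles (Theorem \ref{TensorTheorem}) upgrades to a tensor product theorem for ordinary semistable Higgs bundles. So the whole proof is a two-step implication chain, with the conjecture used precisely to pass between the two notions of semistability.

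Let me think about the steps carefully.

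First, I would unpack the hypotheses. We are given nilpotent semistable Higgs bundles $(E_1,\theta_1)$ and $(E_2,\theta_2)$ of degree $0$ with $\rk(E_1)+\rk(E_2)\le p+1$. Each $E_i$ has nilpotent Higgs field, and because $\rk(E_i)\le p+1-\rk(E_j)\le p$ (as the other rank is at least $1$), the nilpotency exponent of each $\theta_i$ is strictly less than $p$. Hence each $(E_i,\theta_i)$ satisfies the hypothesis of the Lan-Sheng-Zuo conjecture, which we are assuming to be true. Applying the conjecture, I conclude that each $(E_i,\theta_i)$ is a \emph{strongly} semistable Higgs bundle. This is the one and only place the assumed conjecture enters.

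Second, now that both factors are strongly semistable of degree $0$ with $\rk(E_1)+\rk(E_2)\le p+1$, they satisfy exactly the hypotheses of Theorem \ref{TensorTheorem}. I would like to simply invoke that theorem to conclude that the tensor product $(E_1\otimes E_2,\theta_1\otimes 1+1\otimes\theta_2)$ is strongly semistable, hence in particular semistable (strong semistability formally implies semistability, since being semistable after every Frobenius pullback includes the zeroth pullback). The conclusion of the corollary asks only for semistability of the tensor product, so this finishes the argument.

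The one subtlety I must watch is that Theorem \ref{TensorTheorem} as stated assumes the base field $k$ is the algebraic closure of a finite field, whereas the corollary is meant to hold over a general algebraically closed field of characteristic $p$ (the ambient setting of the paper). The main obstacle is therefore not any deep geometry but bridging this field hypothesis: I would either restrict the corollary's scope to match, or reduce the general case to the finite-field case by a standard spreading-out and specialization argument — one spreads $X$ and the bundles over a finitely generated subring, specializes to closed points with residue fields finite (so that their algebraic closures are $\overline{\mathbb{F}}_p$), applies Theorem \ref{TensorTheorem} there, and uses the openness of the semistable locus to lift semistability back. Since semistability is a closed/open condition in families and tensor products commute with base change, this descent is routine once set up, but it is the genuinely non-formal point and the place where care is required; I expect this field-of-definition reduction, rather than the semistability bookkeeping, to be the crux.
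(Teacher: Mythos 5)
Your argument is exactly the one the paper intends (the paper leaves the proof implicit): use the assumed conjecture to upgrade each factor from semistable to strongly semistable --- the exponent bound holding because $\rk(E_i)\leq p$ --- then invoke Theorem \ref{TensorTheorem}, noting that strong Higgs semistability gives ordinary Higgs semistability because $(E,\theta)$ is itself the leading Higgs term of the defining Higgs--de Rham sequence. Your worry about the base field is moot, since the corollary sits in Section 4, whose standing convention already takes $k$ to be the algebraic closure of a finite field.
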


\section{Preliminary}

Let $X/\mathbb{C}$ be a smooth projective scheme over the complex number field, $(E,\nabla)$ a de Rham bundle on $X$ which satisfies suitable conditions. C. Simpson \cite{Simpson92} constructed a Higgs bundle $(E',\theta)$ associated to $(E,\nabla)$. Later A. Ogus and V. Vologodsky \cite{OgusVologodsky07} established a correspondence between category of de Rham bundles and category of Higgs bundles in positive characteristic using the theory of Azumaya algebra and certain universal algebra. Lan-Sheng-Zuo \cite{LanShengZuo12i} gave a explicit construction of inverse Cartier transform for nilpotent Higgs bundles of exponent $\leq p-1$ which is equivalent to the correspondence in \cite{OgusVologodsky07}.

Now, let's first recall the definitions of strongly semistable Higgs bundle and quasi-periodic Higgs bundle. Let $k$ be an algebraically closed field of characteristic $p>0$, $X$ a smooth projective curve of genus $g$ over $k$, and $F:X\rightarrow X$ the absolute Frobenius morphism. A \emph{Higgs-de Rham sequence} over $X$ is a sequence of form
$$
\xymatrix{
    &  (H_0,\nabla_0)\ar[dr]^{Gr_{Fil_0}}       &&  (H_1,\nabla_1)\ar[dr]^{Gr_{Fil_1}}  \\
 (E_0,\theta_0) \ar[ur]^{C^{-1}}  & &     (E_1,\theta_1) \ar[ur]^{C^{-1}}&&\ldots      }
$$
where $C^{-1}$ is the inverse Cartier transform, $Fil_i$ is a decreasing filtration on $H_i$ satisfies Griffiths transversality, $(H_i,\nabla_i)=C^{-1}(E_i,\theta_i)$ is a de Rham bundle and $(E_i,\theta_i)=Gr_{Fil_{i-1}}(H_{i-1},\nabla_{i-1})$ is a Higgs bundle on $X$.

\begin{Definition}\label{Def:StronglySemistableBundle}
A Higgs bundle $(E,\theta)$ is called \emph{strongly Higgs semistable} if it appears in the leading term of a Higgs-de Rham sequence whose
Higgs terms $(E_i,\theta_i)$s are all Higgs semistable.
\end{Definition}

\begin{Remark}
The definition of strongly Higgs semistable has been modified in the new version (\cite[Definition 2.2]{LanShengZuo13ii}) of the Definition 2.1 in \cite{LanShengZuo12ii}. But this does not affect our proof below.
\end{Remark}

\begin{Definition}\label{Def:Quasi-PeriodicBundle}
A Higgs bundle $(E,\theta)$ is called \emph{quasi-periodic} if it appears in the leading term of a quasi-periodic Higgs-de Rham sequence, i.e., it becomes periodic after a nonnegative integer.
\end{Definition}

\begin{Lemma}\label{Lem:Higgs-DeRham}
Let $k$ be an algebraically closed field, $X$ a smooth projective curve over $k$. Let $(H,\nabla)$ be a $\nabla$-semistable flat bundle on $X$, $H'$ a subsheaf of $H$ with $\mu(H')>\mu(H)$. Then the homomorphism $H'\rightarrow (H/H')\otimes_{\Ox_X}\Omg^1_X$ is non-trivial.
\end{Lemma}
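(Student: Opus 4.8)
The plan is to recognize the stated homomorphism as the \emph{second fundamental form} of the connection relative to the subsheaf $H'$, and to show that its vanishing is equivalent to $H'$ being a horizontal ($\nabla$-invariant) subsheaf, which then collides with $\nabla$-semistability. Concretely, I would define the map in question as the composite
$$
\phi\colon H'\hookrightarrow H\xrightarrow{\ \nabla\ }H\otimes_{\Ox_X}\Omg^1_X\twoheadrightarrow (H/H')\otimes_{\Ox_X}\Omg^1_X .
$$
The first thing to verify is that $\phi$ is genuinely $\Ox_X$-linear (a priori $\nabla$ is only a connection): for a local section $s$ of $H'$ and a local function $f$, the Leibniz rule gives $\nabla(fs)=f\,\nabla(s)+s\otimes df$, and since $s\otimes df$ lies in $H'\otimes_{\Ox_X}\Omg^1_X$ it vanishes in the quotient, so $\phi(fs)=f\,\phi(s)$. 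Hence $\phi$ is an honest $\Ox_X$-module homomorphism, and asking whether it is nontrivial is meaningful.

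I would then argue by contraposition. Assume $\phi=0$. By the very construction of $\phi$ this says exactly that $\nabla(H')\subseteq H'\otimes_{\Ox_X}\Omg^1_X$, i.e.\ that $H'$ is a $\nabla$-invariant subsheaf of $(H,\nabla)$. Because $\nabla$-semistability constrains the slopes of saturated horizontal subsheaves, I would pass to the saturation $\overline{H'}$ of $H'$ in $H$ and check that it is still $\nabla$-invariant. For this, note that $\nabla$ descends to a connection on $H/H'$, and a connection on a coherent sheaf preserves its torsion subsheaf: if $f\cdot t=0$ with $f\neq 0$, then $f\,\overline{\nabla(t)}=\overline{-\,t\otimes df}=0$ in the torsion-free sheaf $(H/H')/\mathrm{tors}\otimes_{\Ox_X}\Omg^1_X$, forcing $\overline{\nabla(t)}=0$. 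Since $\overline{H'}$ is the preimage in $H$ of the torsion subsheaf of $H/H'$, it follows that $\overline{H'}$ is horizontal, with $\mu(\overline{H'})\geq\mu(H')>\mu(H)$. This contradicts $\nabla$-semistability of $(H,\nabla)$, so $\phi\neq 0$, as claimed.

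The only genuinely subtle step is the saturation argument, namely that the torsion subsheaf of the quotient $H/H'$ is preserved by the induced connection, so that saturating a horizontal subsheaf keeps it horizontal; everything else is essentially formal once $\phi$ is seen to be $\Ox_X$-linear. If one adopts the convention in which $\nabla$-semistability is tested already against \emph{all} nonzero proper $\nabla$-invariant subsheaves (not merely saturated ones), then this step is unnecessary and the contradiction $\mu(H')>\mu(H)$ is immediate; I would remark on this so that the proof is robust to the precise formulation of $\nabla$-semistability used.
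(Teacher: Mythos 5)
Your proof is correct and follows essentially the same route as the paper's: the vanishing of the second fundamental form means $H'$ is $\nabla$-invariant, which contradicts $\nabla$-semistability since $\mu(H')>\mu(H)$. The paper states this in two lines; your additional verifications (the $\Ox_X$-linearity of the map and the horizontality of the saturation) are sound and simply make explicit what the paper leaves implicit.
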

\begin{proof}
Suppose that $H'$ is invariant under $\nabla$, then $\mu(H')\leq\mu(H)$ by the $\nabla$-semistability of $(H,\nabla)$. A contradiction.
\end{proof}

\begin{Remark}
Suppose that the characteristic of $k$ is $p>0$. Let $(E,\theta)$ be a nilpotent semistable Higgs bundle of exponent $\leq p-1$ on $X$ and $(H,\nabla):=C^{-1}(E,\theta)$. Then $(H,\nabla)$ is $\nabla$-semistable by \cite[Theorem 2.8]{OgusVologodsky07}.
\end{Remark}

\section{Rank $3$ semistable Higgs Bundles are Strongly Higgs semistable}

In this section, unless otherwise explicitly declared, we have the following notations and assumptions:

Let $k$ be an algebraically closed field of characteristic $p>0$, $X$ a smooth projective curve of genus $g\geq 0$ over $k$, and $(E,\theta)$ a rank $3$ nilpotent semistable Higgs bundle of exponent $\leq p-1$ on $X$. Denote $(H,\nabla):=C^{-1}(E,\theta)$,
where $C^{-1}$ is the inverse Cartier transform constructed in \cite{OgusVologodsky07} and \cite{LanShengZuo12i}. We would like to construct a Hodge filtration $\{Fil^i_H\}$ of $H$ with respect to the connection $\nabla$ such that the grading $Gr_{Fil^*_H}(H,\nabla)$ is a semistable Higgs bundle.

Suppose that $H$ is a semistable bundle, we can choose the trivial filtration $Fil_{tri}$ on $H$, then there is nothing need to prove. We assume that $H$ is not a semistable bundle. Let $S\subseteq H$ (resp. $H\twoheadrightarrow Q$) be the subbundle (resp. quotient bundle) of $H$ with maximal (resp. minimal) slope in the Harder-Narasimhan filtration of $H$.

\begin{Proposition}\label{Prop:S}
Let $Q':=H/S$, $\theta:S\hookrightarrow H\stackrel{\nabla}{\rightarrow}H\otimes_{\Ox_X}\Omg^1_X\twoheadrightarrow Q'\otimes_{\Ox_X}\Omg^1_X$ the natural homomorphism, and $\widetilde{S}$ the saturation of $\im(\theta)\otimes_{\Ox_X}T_X$ in $Q'$.
Suppose that $\rk(S)=1$ and $\mu_{max}(Q')\leq\mu(H)$. Then
\item[$(I)$] If $\mu(Q'/\widetilde{S})\leq\mu(H)$, then $(S\oplus\widetilde{S}\oplus(Q'/\widetilde{S}),\bigoplus\limits^2_{i=1}\theta^i)$ is a semistable Higgs bundle;
\item[$(II)$] If $\mu(Q'/\widetilde{S})\geq\mu(H)$, then $(S\oplus Q',\theta)$ is a semistable Higgs bundle.
\end{Proposition}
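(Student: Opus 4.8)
The plan is to reduce both statements to a single uniform task: verify that every saturated $\theta$-invariant subsheaf $F$ of the relevant graded Higgs bundle $V$ satisfies $\mu(F)\leq\mu(H)$. Since passing to the grading of a filtration preserves rank and degree, we have $\mu(V)=\mu(H)$, so this inequality for all invariant $F$ is precisely Higgs semistability. Before splitting into cases I would record the numerics. As $\rk(S)=1$ we get $\rk(Q')=2$; since $\mu(S)=\mu_{\max}(H)>\mu(H)$, the Lemma forces the homomorphism $\theta\colon S\to Q'\otimes_{\Ox_X}\Omg^1_X$ to be nonzero, so $\im(\theta)$ and its saturation $\widetilde{S}$ have rank $1$ and $Q'/\widetilde{S}$ has rank $1$. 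This also gives the free bounds $\mu(\widetilde{S})\leq\mu_{\max}(Q')\leq\mu(H)$ and $\mu(Q')=(3\mu(H)-\mu(S))/2<\mu(H)$, while the three line-bundle degrees sum to $\deg(H)=3\mu(H)$.

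For part $(II)$ the graded object is $S\oplus Q'$ with the single arrow $\theta\colon S\to Q'\otimes_{\Ox_X}\Omg^1_X$, and I would argue by rank. Any invariant line subsheaf lies in $\ker\theta=Q'$, hence has slope $\leq\mu_{\max}(Q')\leq\mu(H)$. A saturated invariant rank-$2$ subsheaf $F$ either equals $Q'$ (slope $<\mu(H)$) or surjects onto $S$; in the latter case $\theta$-invariance forces $F\supseteq\widetilde{S}$, so $F$ is an extension of a subsheaf of $S$ by $\widetilde{S}$ and $\deg F\leq\deg S+\deg\widetilde{S}$. Using the degree relation, $\mu(F)\leq\mu(H)$ is equivalent to $\deg S+\deg\widetilde{S}\leq 2\mu(H)$, i.e.\ to $\mu(Q'/\widetilde{S})\geq\mu(H)$ — which is exactly the hypothesis of $(II)$.

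For part $(I)$ the grading is $S\oplus\widetilde{S}\oplus(Q'/\widetilde{S})$ with a two-step nilpotent field $\theta^1+\theta^2$, where $\theta^1\neq 0$ by the above. The same bookkeeping disposes of almost every invariant subsheaf: the bottom piece $Q'/\widetilde{S}$ is bounded by the case-$(I)$ hypothesis, the middle term $Q'$ by $\mu(Q')<\mu(H)$, and every invariant line subsheaf lies in $\ker\theta$ and so has slope $\leq\max(\mu(\widetilde{S}),\mu(Q'/\widetilde{S}))\leq\mu(H)$. The only delicate object is a saturated invariant rank-$2$ subsheaf $F$ surjecting onto the top piece $S$. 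The key combinatorial observation is that such an invariant $F$ can exist only when $\theta^2=0$: if $\theta^2\neq 0$, invariance forces $F$ to contain both $\widetilde{S}$ and $Q'/\widetilde{S}$, hence $F=V$, leaving nothing to check.

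The main obstacle is therefore precisely the vanishing locus of $\theta^2$, and here the plan is to feed $\nabla$-semistability back in. I would identify $\theta^2$ with the second fundamental form of the rank-$2$ subbundle $\widehat{S}\subseteq H$, the preimage of $\widetilde{S}$ under $H\twoheadrightarrow Q'$, so that $\theta^2=0$ is equivalent to $\widehat{S}$ being $\nabla$-invariant. In that case the $\nabla$-semistability of $(H,\nabla)$ yields $\mu(\widehat{S})\leq\mu(H)$, and since $F$ and $\widehat{S}$ have the same rank and degree this gives $\mu(F)\leq\mu(H)$; combined with the case-$(I)$ inequality it merely pins the situation to the boundary $\mu(Q'/\widetilde{S})=\mu(H)$, where equality of slopes is harmless for semistability. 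Thus the crux of the whole argument is the translation ``$\theta^2=0\iff\widehat{S}$ horizontal'', which is what lets the proof recycle the $\nabla$-semistability of $(H,\nabla)$ exactly at the point where the combinatorics of the graded Higgs bundle would otherwise leave a gap.
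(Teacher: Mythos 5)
Your proposal is correct and follows essentially the same route as the paper: a rank-by-rank slope estimate on ($\theta$-invariant, saturated) subsheaves of the graded object, with Lemma \ref{Lem:Higgs-DeRham} (i.e.\ $\nabla$-semistability of $(H,\nabla)$) invoked exactly where the paper invokes it --- to make $\theta^1$ injective and, in the delicate rank-$2$ case of $(I)$, to force $\theta^2=0$ and hence $\mu(\widehat{S})\leq\mu(H)$ via the second-fundamental-form interpretation, which is precisely the mechanism behind the paper's terse claim that ``$\theta^2$ is trivial and $\mu(Q'/\widetilde{S})=\mu(H)$.'' The only nitpick is that such an $F$ need not have the same degree as $\widehat{S}$ (only $\deg F\leq\deg S+\deg\widetilde{S}=\deg\widehat{S}$, since $F\cap(\widetilde{S}\oplus(Q'/\widetilde{S}))=\widetilde{S}$ and $F/\widetilde{S}\hookrightarrow S$), but the inequality is all you use, so the argument stands.
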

\begin{proof} For the case $\mu(H)=0$, the assumption of this proposition can be explained in the following diagram (The general case is similar to the slope zero case after rotate this diagram along the origin).

\begin{center}
\begin{picture}(150,100)
\linethickness{0.5pt}
\put(0,50){\vector(1,0){150}}
\put(0,0){\vector(0,1){100}}
\multiput(0,50)(40,0){4}{\line(0,1){2}}
\multiput(0,10)(0,40){3}{\line(1,0){2}}
\put(140,53){\tiny{rank}}
\put(2,95){\tiny{deg}}
\thinlines
\multiput(0,90)(4,0){30}{\line(1,0){2}} 
\put(0,50){\line(1,1){40}}
\put(15,70){\tiny{$S$}}
\multiput(44,89)(12,-3){3}{\line(4,-1){10}}
\multiput(80,80)(12,-9){3}{\line(4,-3){10}}
\put(80,75){\shortstack{\tiny{Harder-Narasimhan}\\\tiny{filtration of $H$}}}
\put(40,90){\line(4,-3){40}}
\put(80,60){\line(4,-1){40}}
\put(80,63){\tiny{I}}
\put(65,53){\tiny{$\widetilde{S}$}}
\put(67,60){\vector(0,1){8}}
\put(67,52){\vector(0,-1){9}}

\put(40,90){\line(1,-2){40}}
\put(80,10){\line(1,1){40}}
\put(79,18){\tiny{II}}
\put(85,40){\tiny{$Q'/\widetilde{S}$}}
\put(94,46){\vector(0,1){9}}
\put(94,38){\vector(0,-1){9}}
\end{picture}
\end{center}

$(I)$ Let $F$ be a nonzero Higgs subsheaf of $(S\oplus\widetilde{S}\oplus(Q'/\widetilde{S}),\bigoplus\limits^2_{i=1}\theta^i)$.

\textbf{Case I.i:} The composition $F\hookrightarrow S\oplus\widetilde{S}\oplus(Q'/\widetilde{S})\twoheadrightarrow Q'/\widetilde{S}$ is trivial, i.e. $F\subset S\oplus\widetilde{S}$.

Suppose that $\rk(F)=1$. Then the composition $F\hookrightarrow S\oplus\widetilde{S}\twoheadrightarrow\widetilde{S}$ is not trivial. Otherwise, we have $F\subset S$, this contradicts to the fact that $\theta^1:S\hookrightarrow\widetilde{S}\otimes_{\Ox_X}\Omg^1_X$ is injective by Lemma \ref{Lem:Higgs-DeRham}. Thus $\mu(F)\leq\mu(\widetilde{S})\leq\mu_{max}(Q')\leq\mu(H)$.

Suppose that $\rk(F)=2$. Then we can get that $\theta^2:\widetilde{S}\hookrightarrow(Q'/\widetilde{S})\otimes_{\Ox_X}\Omg^1_X$ is trivial and $\mu(Q'/\widetilde{S})=\mu(H)$. Thus $\mu(F)\leq\mu(S\oplus\widetilde{S})=\mu(H)$.

\textbf{Case I.ii:} The composition $F\hookrightarrow S\oplus\widetilde{S}\oplus(Q'/\widetilde{S})\twoheadrightarrow Q'/\widetilde{S}$ is not trivial.

Suppose that $\rk(F)=1$. Then the injection $F\hookrightarrow S\oplus\widetilde{S}\oplus(Q'/\widetilde{S})\twoheadrightarrow Q'/\widetilde{S}$ implies that  $\mu(F)\leq\mu(Q/\widetilde{S})\leq\mu(H)$.

Suppose that $\rk(F)=2$. Then $\rk(F\cap(S\oplus\widetilde{S}))=1$ and the composition $F\cap(S\oplus\widetilde{S})\hookrightarrow S\oplus\widetilde{S}\twoheadrightarrow\widetilde{S}$ is injective, since $\theta^1:S\hookrightarrow\widetilde{S}\otimes_{\Ox_X}\Omg^1_X$ is injective by Lemma \ref{Lem:Higgs-DeRham}.
Consider the commutative diagram
$$\xymatrix{
  0 \ar[r] & F\cap(S\oplus\widetilde{S})\ar[r]\ar@{^{(}->}[d] & F\ar[r]\ar@{^{(}->}[d] & F/(F\cap(S\oplus\widetilde{S}))\ar[r]\ar@{^{(}->}[d] & 0\\
  0 \ar[r] & S\oplus\widetilde{S}\ar[r]   & S\oplus\widetilde{S}\oplus(Q'/\widetilde{S})\ar[r]      & Q'/\widetilde{S}\ar[r]                          & 0.}$$
Then we have
\begin{eqnarray*}
\mu(F)&=&\frac{1}{2}[\mu(F\cap(S\oplus\widetilde{S}))+\mu(F/(F\cap(S\oplus\widetilde{S})))]\\
&\leq&\frac{1}{2}[\mu(\widetilde{S})+\mu(Q'/\widetilde{S})]\\
&<&\mu(H).
\end{eqnarray*}

Hence $(S\oplus\widetilde{S}\oplus(Q'/\widetilde{S}),\bigoplus\limits^2_{i=1}\theta^i)$ is a semistable Higgs bundle.

$(II)$ Let $F$ be a nonzero Higgs subsheaf of $(S\oplus Q',\theta)$.  Then the composition $F\hookrightarrow S\oplus Q'\twoheadrightarrow Q'$ is not trivial, since $\theta:S\hookrightarrow Q'\otimes_{\Ox_X}\Omg^1_X$ is not trivial by Lemma \ref{Lem:Higgs-DeRham}.

Suppose that $\rk(F)=1$. Then the injection $F\hookrightarrow S\oplus Q'\twoheadrightarrow Q'$ implies that $\mu(F)\leq\mu_{max}(Q')\leq\mu(H)$.

Suppose that $\rk(F)=2$. If the composition $F\hookrightarrow S\oplus Q'\twoheadrightarrow Q'$ is injective, then $\mu(F)\leq\mu(Q')<\mu(H)$. If the composition $F\hookrightarrow S\oplus Q'\twoheadrightarrow Q'$ is not injective, then $\rk(F\cap S)=\rk(F\cap Q')=1$. Consider the following commutative diagram
$$\xymatrix{
F\ar^-{\theta}[r]\ar@{^{(}->}[d]                & F\otimes_{\Ox_X}\Omg^1_X\ar@{^{(}->}[d]\\
S\oplus Q'\ar^-{\theta}[r]\ar@{->>}^{p_1}[d] & (S\oplus Q')\otimes_{\Ox_X}\Omg^1_X\\
S\ar@{^{(}->}^-{\theta}[r]                      & \widetilde{S}\otimes_{\Ox_X}\Omg^1_X\subset Q'\otimes_{\Ox_X}\Omg^1_X\ar@{^{(}->}^{i_2\otimes id_{\Omg^1_X}}[u].}$$
where $p_1:S\oplus Q'\rightarrow S$ is the natural projection and $i_2:Q'\hookrightarrow S\oplus Q'$ is the natural injection.
Thus $F\cap\widetilde{S}\neq 0$. Therefore, the saturations of $F\cap\widetilde{S}$ and $F\cap Q'$ in $Q'$ are the same, since $F\cap\widetilde{S}\subseteq F\cap Q'$ with $\rk(F\cap\widetilde{S})=\rk(F\cap Q')=1$. It follows that $F\cap Q'\subseteq\widetilde{S}$, since $F\cap\widetilde{S}\subseteq\widetilde{S}$ and $\widetilde{S}$ is a subbundle of $Q'$. Consider the commutative diagram
$$\xymatrix{
  0 \ar[r] & F\cap Q'\ar[r]\ar@{^{(}->}[d] & F\ar[r]\ar@{^{(}->}[d]  & F/(F\cap Q')\ar[r]\ar@{^{(}->}[d] & 0\\
  0 \ar[r] & Q'\ar[r]                      & S\oplus Q'\ar[r]     & S\ar[r]                      & 0.}$$
Then we have
\begin{eqnarray*}
\mu(F)&=&\frac{1}{2}[\mu(F\cap Q')+\mu(F/(F\cap Q'))]\\
&\leq&\frac{1}{2}[\mu(\widetilde{S})+\mu(S)]\\
&=&\frac{1}{2}[3\mu(H)-\mu(Q'/\widetilde{S})]\\
&\leq&\mu(H).
\end{eqnarray*}

Hence $(S\oplus Q',\theta)$ is a semistable Higgs bundle.
\end{proof}

\begin{Proposition}\label{Prop:Q}
Let $S':=\ker(H\twoheadrightarrow Q)$, $\theta:S'\hookrightarrow H\stackrel{\nabla}{\rightarrow}H\otimes_{\Ox_X}\Omg^1_X\twoheadrightarrow Q\otimes_{\Ox_X}\Omg^1_X$ the natural homomorphism, and $K:=\ker(\theta:S'\rightarrow Q\otimes_{\Ox_X}\Omg^1_X)$.
Suppose that $\rk(Q)=1$ and $\mu_{min}(S')\geq\mu(H)$. Then
\item[$(I)$] If $\mu(K)\leq\mu(H)$, then $(S'\oplus Q,\theta)$ is a semistable Higgs bundle;
\item[$(II)$] If $\mu(K)\geq\mu(H)$, then $(K\oplus(S'/K)\oplus Q,\bigoplus\limits^2_{i=1}\theta^i)$ is a semistable Higgs bundle.
\end{Proposition}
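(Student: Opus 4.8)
The plan is to deduce this proposition from Proposition \ref{Prop:S} by passing to the dual flat bundle $(H^\vee,\nabla^\vee)$, since the hypotheses here (a rank $1$ minimal-slope quotient $Q$ with $\mu_{min}(S')\geq\mu(H)$) are exactly linear-dual to those of Proposition \ref{Prop:S} (a rank $1$ maximal-slope sub $S$ with $\mu_{max}(Q')\leq\mu(H)$). First I would record that $(H^\vee,\nabla^\vee)$ is again $\nabla$-semistable: a $\nabla^\vee$-invariant subsheaf of $H^\vee$ is the annihilator of a $\nabla$-invariant quotient of $H$, so a destabilizing $\nabla^\vee$-subsheaf would produce a destabilizing $\nabla$-quotient of $H$, contradicting the $\nabla$-semistability of $(H,\nabla)$. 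Dualizing $0\to S'\to H\to Q\to 0$ yields $0\to Q^\vee\to H^\vee\to (S')^\vee\to 0$, in which $Q^\vee$ is the rank $1$ maximal-slope subbundle (because $Q$ is the minimal-slope quotient) and $(S')^\vee$ is the quotient with $\mu_{max}((S')^\vee)=-\mu_{min}(S')\leq\mu(H^\vee)$. Thus $(H^\vee,\nabla^\vee)$, with $S:=Q^\vee$ and $Q':=(S')^\vee$, satisfies the standing hypotheses of Proposition \ref{Prop:S}. I would also note that $\mu(S')>\mu(H)$, so by Lemma \ref{Lem:Higgs-DeRham} the map $\theta$ is nontrivial and $\rk(K)=\rk(S'/K)=1$.

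Next I would match the Higgs data. The second fundamental form $\theta:S'\hookrightarrow H\xrightarrow{\nabla}H\otimes_{\Ox_X}\Omg^1_X\twoheadrightarrow Q\otimes_{\Ox_X}\Omg^1_X$ and its counterpart $\widehat{\theta}:Q^\vee\hookrightarrow H^\vee\xrightarrow{\nabla^\vee}H^\vee\otimes_{\Ox_X}\Omg^1_X\twoheadrightarrow (S')^\vee\otimes_{\Ox_X}\Omg^1_X$ for the dual are transpose to one another, because $\nabla^\vee$ is the dual connection; concretely $\widehat{\theta}=\theta^\vee$. Under this identification the kernel $K=\ker(\theta)\subseteq S'$ corresponds to the saturated image $\widetilde{S}$ of Proposition \ref{Prop:S}: by the linear algebra of transposes, $\im(\theta^\vee)\otimes_{\Ox_X}T_X$ saturates to $(S'/K)^\vee\subseteq (S')^\vee$, so $\widetilde{S}=(S'/K)^\vee$ and hence $Q'/\widetilde{S}=K^\vee$, giving the slope dictionary $\mu(Q'/\widetilde{S})=-\mu(K)$. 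In particular the condition $\mu(K)\leq\mu(H)$ becomes $\mu(Q'/\widetilde{S})\geq\mu(H^\vee)$, which is precisely case $(II)$ of Proposition \ref{Prop:S}, while $\mu(K)\geq\mu(H)$ becomes case $(I)$.

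I would then invoke the self-duality of Higgs semistability on a curve: a Higgs bundle $(V,\phi)$ is semistable if and only if $(V^\vee,\phi^\vee)$ is, because Higgs subsheaves of $V^\vee$ are annihilators of Higgs quotients of $V$, and semistability tested on subsheaves agrees with that tested on quotients. For part $(I)$, Proposition \ref{Prop:S}$(II)$ applied to $(H^\vee,\nabla^\vee)$ shows that $(S\oplus Q',\widehat{\theta})=(Q^\vee\oplus (S')^\vee,\theta^\vee)=(S'\oplus Q,\theta)^\vee$ is Higgs semistable, whence $(S'\oplus Q,\theta)$ is Higgs semistable. For part $(II)$, Proposition \ref{Prop:S}$(I)$ gives that $(S\oplus\widetilde{S}\oplus(Q'/\widetilde{S}),\bigoplus^2_{i=1}\widehat{\theta}^i)=(Q^\vee\oplus(S'/K)^\vee\oplus K^\vee,\ldots)=(K\oplus(S'/K)\oplus Q,\bigoplus^2_{i=1}\theta^i)^\vee$ is Higgs semistable, whence the bundle in the statement is Higgs semistable.

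The step I expect to be the main obstacle is the clean identification $\widehat{\theta}=\theta^\vee$ together with $\widetilde{S}=(S'/K)^\vee$: one must verify that the two second fundamental forms, one built from $\nabla$ and one from $\nabla^\vee$, are genuinely transpose (including the sign and the twist bookkeeping by $\Omg^1_X$ and $T_X$), and that forming images, kernels and their saturations commutes with dualization. Everything else — preservation of $\nabla$-semistability, the rank and slope dictionary, and self-duality of Higgs semistability — is formal. I would remark that a direct proof mirroring Proposition \ref{Prop:S} line by line is also possible, but that its rank $2$ subsheaf case is delicate: the naive slope estimate yields only $\mu(F)\leq\tfrac{1}{2}(3\mu(H)-\mu(K))$, which is insufficient in case $(I)$, so one genuinely needs the containment of the destabilizing subsheaf in the saturated image bundle — exactly the role played by $\widetilde{S}$ in Proposition \ref{Prop:S} — which the dual formulation supplies automatically.
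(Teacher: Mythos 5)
Your proof is correct, but it takes a genuinely different route from the paper. The paper proves Proposition \ref{Prop:Q} directly, by the same kind of case analysis as Proposition \ref{Prop:S}: for each rank $1$ or rank $2$ Higgs subsheaf $F$ of the graded object it runs a slope estimate, using Lemma \ref{Lem:Higgs-DeRham} to force nontriviality of the graded Higgs maps and the identities $\mu_{max}(S')+\mu_{min}(S')=\deg(S')$, $\deg(S')=3\mu(H)-\mu(Q)$ to close the inequalities. Your dualization argument instead reduces $(II)$ to Proposition \ref{Prop:S}$(I)$ and $(I)$ to Proposition \ref{Prop:S}$(II)$; all the ingredients you flag do check out: $(H^\vee,\nabla^\vee)$ is again $\nabla$-semistable, $Q^\vee$ is the maximal destabilizing line subbundle of $H^\vee$ with $\mu_{max}((S')^\vee)\leq\mu(H^\vee)$, the second fundamental form of $Q^\vee\subset H^\vee$ is $-\theta^\vee$ (the sign is harmless), and since $K$ is saturated in $S'$ the sheaf $(S'/K)^\vee$ is a subbundle of $(S')^\vee$ containing $\im(\theta^\vee)\otimes T_X$ with torsion quotient, so it is exactly the $\widetilde{S}$ of the dual picture and $Q'/\widetilde{S}\cong K^\vee$. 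What your approach buys is economy and an explanation of why the two propositions are mirror images of each other; what the paper's direct proof buys is independence from the duality bookkeeping and a uniform template that displays the explicit Hodge filtrations later used in Theorem \ref{Thm:StronglySemistableHiggsBundle}. One small inaccuracy in your closing remark: the rank $2$ case of a direct proof of part $(I)$ is not actually delicate --- the paper bounds $\mu(F)\leq\tfrac12[\mu_{max}(S')+\mu(Q)]=\tfrac12[3\mu(H)-\mu_{min}(S')]\leq\mu(H)$ using only the hypothesis $\mu_{min}(S')\geq\mu(H)$, with no appeal to $K$ or to a saturated image bundle; the containment argument via $\widetilde{S}$ is needed only in Proposition \ref{Prop:S}$(II)$, which is where your duality sends it.
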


\begin{proof}
For the case $\mu(H)=0$, the assumption of this proposition can be explained in the following diagram (The general case is similar to the slope zero case after rotate this diagram along the origin).
\begin{center}
\begin{picture}(150,100)
\linethickness{0.5pt}
\put(0,50){\vector(1,0){150}}
\put(0,0){\vector(0,1){100}}
\multiput(0,50)(40,0){4}{\line(0,1){2}}
\multiput(0,10)(0,40){3}{\line(1,0){2}}
\put(140,53){\tiny{rank}}
\put(2,95){\tiny{deg}}
\thinlines
\multiput(0,90)(4,0){30}{\line(1,0){2}} 
\put(80,90){\line(1,-1){40}}
\put(105,70){\tiny{$Q$}}
\multiput(4,53)(12,9){3}{\line(4,3){10}}
\multiput(40,80)(12,3){3}{\line(4,1){10}}
\put(90,75){\shortstack{\tiny{Harder-Narasimhan}\\\tiny{filtration of $H$}}}
\put(0,50){\line(4,1){40}}
\put(40,60){\line(4,3){40}}
\put(37,63){\tiny{II}}
\put(44,53){\tiny{$S'/K$}}
\put(53,60){\vector(0,1){8}}
\put(53,52){\vector(0,-1){9}}
\put(40,10){\line(1,2){40}}
\put(0,50){\line(1,-1){40}}
\put(38,18){\tiny{I}}
\put(23,40){\tiny{$K$}}
\put(26,46){\vector(0,1){9}}
\put(26,38){\vector(0,-1){9}}
\end{picture}
\end{center}

$(I)$ Let $F$ be a nonzero Higgs subsheaf of $(S'\oplus Q,\theta)$.

Suppose that $\rk(F)=1$. If the composition $F\hookrightarrow S'\oplus Q\twoheadrightarrow Q$ is not trivial, then $\mu(F)\leq\mu(Q)<\mu(H)$. If the composition $F\hookrightarrow S'\oplus Q\twoheadrightarrow Q$ is trivial, then $F\subseteq K$ since $K=\ker(S'\rightarrow Q\otimes_{\Ox_X}\Omg^1_X)$. Thus $\mu(F)\leq\mu(K)\leq\mu(H)$.

Suppose that $\rk(F)=2$. Then the composition $F\hookrightarrow S'\oplus Q\twoheadrightarrow Q$ is not trivial.
Consider the commutative diagram
$$\xymatrix{
  0 \ar[r] & F\cap S'\ar[r]\ar@{^{(}->}[d] & F\ar[r]\ar@{^{(}->}[d]  & F/(F\cap S')\ar[r]\ar@{^{(}->}[d] & 0\\
  0 \ar[r] & S'\ar[r]                      & S'\oplus Q\ar[r]     & Q\ar[r]                      & 0.}$$
Therefore, we have
\begin{eqnarray*}
\mu(F)&=&\frac{1}{2}[\mu(F\cap S')+\mu(F/(F\cap S'))]\leq\frac{1}{2}[\mu_{max}(S')+\mu(Q)]\\
&=&\frac{1}{2}[3\mu(H)-\mu_{min}(S')]\\
&\leq&\mu(H).
\end{eqnarray*}

Hence $(S'\oplus Q,\theta)$ is a semistable Higgs bundle.\\

$(II)$ Let $F$ be a nonzero Higgs subsheaf of $(K\oplus(S'/K)\oplus Q,\bigoplus\limits^2_{i=1}\theta^i)$.

Suppose that $\rk(F)=1$. If the composition $F\hookrightarrow K\oplus(S'/K)\oplus Q\twoheadrightarrow Q$ is not trivial, then $\mu(F)\leq\mu(Q)<\mu(H)$. If the composition $F\hookrightarrow K\oplus(S'/K)\oplus Q\twoheadrightarrow Q$ is trivial, then $F\subseteq K\subset K\oplus(S'/K)$ since $(S'/K)\rightarrow Q\otimes_{\Ox_X}\Omg^1_X$ is injective by Lemma \ref{Lem:Higgs-DeRham}. In this case, we must have $\mu(K)=\mu(H)$. Otherwise, if $\mu(K)>\mu(H)$, then $K\rightarrow(S'/K)\otimes_{\Ox_X}\Omg^1_X$ is injective by Lemma \ref{Lem:Higgs-DeRham}. This controdicts to $F\subseteq K$. Thus $\mu(F)\leq\mu(K)=\mu(H)$.

Suppose that $\rk(F)=2$. Then the composition $F\hookrightarrow K\oplus(S'/K)\oplus Q\twoheadrightarrow Q$ is not trivial, since $\theta^2:(S'/K)\hookrightarrow Q\otimes_{\Ox_X}\Omg^1_X$ is injective. Moreover, we have $$\mu(F\cap(K\oplus(S'/K)))\leq\mu(S'/K).$$ In fact, if $\mu(K)>\mu(H)$, then the homomorphism $\theta^1:K\hookrightarrow (S'/K)\otimes_{\Ox_X}\Omg^1_X$ is injective by Lemma \ref{Lem:Higgs-DeRham}. In this case, the composition $F\cap(K\oplus(S'/K))\hookrightarrow K\oplus(S'/K)\twoheadrightarrow S'/K$ is not trivial, then $\mu(F\cap(K\oplus(S'/K)))\leq\mu(S'/K)$. If $\mu(K)=\mu(H)$, then $$\mu(F\cap(K\oplus(S'/K)))\leq\mu_{max}(K\oplus(S'/K))=\mu(S'/K).$$
Consider the commutative diagram
$$\xymatrix{
  0 \ar[r] & F\cap(K\oplus(S'/K))\ar[r]\ar@{^{(}->}[d] & F\ar[r]\ar@{^{(}->}[d]  & F/(F\cap(K\oplus(S'/K)))\ar[r]\ar@{^{(}->}[d] & 0\\
  0 \ar[r] & K\oplus(S'/K)\ar[r]       & K\oplus(S'/K)\oplus Q\ar[r]       & Q\ar[r]             & 0.}$$
Therefore, we have
\begin{eqnarray*}
\mu(F)&=&\frac{1}{2}[\mu(F\cap(K\oplus(S'/K)))+\mu(F/(F\cap(K\oplus(S'/K))))]\\
&\leq&\frac{1}{2}[\mu(S'/K)+\mu(Q)]\\
&=&\frac{1}{2}[3\mu(H)-\mu(K)]\\
&\leq&\mu(H).
\end{eqnarray*}

Hence $(K\oplus(S'/K)\oplus Q,\bigoplus\limits^2_{i=1}\theta^i)$ is a semistable Higgs bundle.
\end{proof}

Combine Proposition \ref{Prop:S} with Proposition \ref{Prop:Q}, we can get the following theorem which is the main result of this section.

\begin{Theorem}\label{Thm:StronglySemistableHiggsBundle}
Let $k$ be an algebraically closed field of characteristic $p>2$, $X$ a smooth projective curve over $k$. Let $(E,\theta)$ be a rank $3$ nilpotent semistable Higgs bundle on $X$ and $(H,\nabla):=C^{-1}(E,\theta)$. Then there is a Hodge filtration $\{Fil^*_H\}$ of $(H,\nabla)$ such that the grading $Gr_{Fil^*_H}(H,\nabla)$ is a semistable Higgs bundle. In particular, $(E,\theta)$ is a strongly semistable bundle.
\end{Theorem}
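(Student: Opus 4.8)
The plan is to prove the single-step statement — the existence of a Griffiths-transverse Hodge filtration with semistable grading — and then to iterate it to obtain strong semistability. Since $\theta$ is nilpotent of rank $3$, its exponent is at most $2\leq p-1$, so the inverse Cartier transform $(H,\nabla):=C^{-1}(E,\theta)$ is defined and, by \cite[Theorem 2.8]{OgusVologodsky07}, is $\nabla$-semistable. If $H$ is already a semistable bundle I would take the trivial filtration and be done, so I assume $H$ is unstable and fix its Harder-Narasimhan filtration together with the maximal destabilizing subbundle $S\subseteq H$ and the minimal destabilizing quotient $H\twoheadrightarrow Q$.

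First I would reduce everything to the two situations treated in Proposition \ref{Prop:S} and Proposition \ref{Prop:Q} by a rank count. Since $\rk(H)=3$ and $H$ is unstable, the Harder-Narasimhan type is $(1,2)$, $(2,1)$, or $(1,1,1)$. In type $(1,2)$ one has $\rk(S)=1$ and $Q'=H/S=Q$ is semistable with $\mu_{max}(Q')=\mu(Q)<\mu(H)$, so Proposition \ref{Prop:S} applies. Dually, in type $(2,1)$ one has $\rk(Q)=1$ and $S'=\ker(H\twoheadrightarrow Q)=S$ is semistable with $\mu_{min}(S')=\mu(S)>\mu(H)$, so Proposition \ref{Prop:Q} applies. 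In type $(1,1,1)$ write the graded line bundles as $L_1,L_2,L_3$ with $\mu(L_1)>\mu(L_2)>\mu(L_3)$; here $\rk(S)=\rk(Q)=1$, and I would split on the middle slope: if $\mu(L_2)=\mu_{max}(H/S)\leq\mu(H)$ I apply Proposition \ref{Prop:S}, while if $\mu(L_2)=\mu_{min}(\ker(H\twoheadrightarrow Q))\geq\mu(H)$ I apply Proposition \ref{Prop:Q} (the overlap $\mu(L_2)=\mu(H)$ being covered by either). Thus every unstable rank $3$ configuration falls under at least one of the two propositions with its hypothesis verified.

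Inside the applicable proposition the alternatives $(I)$ and $(II)$ are distinguished by comparing a single slope with $\mu(H)$ — namely $\mu(Q'/\widetilde{S})$ in Proposition \ref{Prop:S} and $\mu(K)$ in Proposition \ref{Prop:Q} — and since these alternatives overlap at equality they are jointly exhaustive. Each outputs a direct-sum Higgs bundle, such as $(S\oplus\widetilde{S}\oplus(Q'/\widetilde{S}),\bigoplus^2_{i=1}\theta^i)$ or $(S\oplus Q',\theta)$, which is exactly the associated graded $Gr_{Fil^*_H}(H,\nabla)$ of the two- or three-step filtration of $H$ whose nonzero graded pieces are these summands. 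Here I would observe that this filtration satisfies Griffiths transversality precisely because $\widetilde{S}$ (respectively $K$) was defined through the $\nabla$-induced map $\theta$, so that the maps $\theta^i$ occurring in the propositions are genuinely the Higgs fields induced by $\nabla$ on the grading. The propositions then guarantee that this grading is a semistable Higgs bundle, which is the first assertion of the theorem.

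Finally I would iterate. The grading $(E_1,\theta_1):=Gr_{Fil^*_H}(H,\nabla)$ is again a rank $3$ Higgs bundle; its field shifts the grading by one and the filtration has at most three steps, so $\theta_1$ is nilpotent of exponent $\leq 2\leq p-1$, and it is semistable by the step just carried out. Hence $C^{-1}(E_1,\theta_1)$ is defined and the whole construction repeats, producing a Higgs-de Rham sequence with leading term $(E,\theta)$ all of whose Higgs terms are semistable; by Definition \ref{Def:StronglySemistableBundle} this means $(E,\theta)$ is strongly semistable. The main obstacle is not the iteration but the bookkeeping of the middle step: one must check that the rank count genuinely exhausts the unstable configurations and that in each the hypothesis $\mu_{max}(Q')\leq\mu(H)$ or $\mu_{min}(S')\geq\mu(H)$ of the invoked proposition holds. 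The essential input making this work is the $\nabla$-semistability of $(H,\nabla)$, which via Lemma \ref{Lem:Higgs-DeRham} forces $\theta$ to be nonzero on the destabilizing pieces and thereby drives the semistability estimates carried out inside the two propositions.
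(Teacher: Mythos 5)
Your proposal is correct and follows essentially the same route as the paper: reduce to the two configurations of Propositions \ref{Prop:S} and \ref{Prop:Q}, read off the Griffiths-transverse filtration whose graded pieces are the summands produced there, and iterate using that the grading is a system of Hodge bundles of exponent $\leq 2\leq p-1$. Your explicit Harder--Narasimhan type analysis $(1,2)$, $(2,1)$, $(1,1,1)$ correctly fills in the step the paper dismisses as ``easy to check,'' and your justification of Griffiths transversality via the definitions of $\widetilde{S}$ and $K$ matches the intended argument.
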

\begin{proof}
Assume that $H$ is semistable, we can choose the trivial filtration of $(H,\nabla)$. Otherwise, it is easy to check that $H$ must satisfy one of the following cases
\begin{itemize}
       \item[$(i)$] $\rk(S)=1$ and $\mu_{max}(H/S)\leq\mu(H)$;
       \item[$(ii)$] $\rk(Q)=1$ and $\mu_{min}(\ker(H\twoheadrightarrow Q))\geq\mu(H)$;
\end{itemize}
where $S\subseteq H$ (resp. $H\twoheadrightarrow Q$) is the subbundle (resp. quotient bundle) with maximal (resp. minimal) slope in the Harder-Narasimhan filtration of $H$.

In the case $(i)$, we use the notation as Proposition \ref{Prop:S}. If $\mu(Q'/\widetilde{S})\leq\mu(H)$, we choose the Hodge filtration $$0\subsetneq S\subsetneq\ker(H\twoheadrightarrow Q'\twoheadrightarrow(Q'/\widetilde{S}))\subsetneq H.$$
If $\mu(Q'/\widetilde{S})\geq\mu(H)$, we choose the Hodge filtration $$0\subsetneq S\subsetneq H.$$
Then their gradings are semistable Higgs bundles by Proposition \ref{Prop:S}.

In the case $(ii)$, we use the notation as Proposition \ref{Prop:Q}. If $\mu(K)\leq\mu(H)$, we choose the Hodge filtration
$$0\subsetneq S'\subsetneq H.$$
If $\mu(K)\geq\mu(H)$, we choose the Hodge filtration
$$0\subsetneq K\subsetneq S'\subsetneq H.$$
Then their gradings are semistable Higgs bundles by Proposition \ref{Prop:Q}.

Since the grading $Gr_{Fil^*_H}(H,\nabla)$ is a system of Hodge bundle of rank $3$, it is a nilpotent semistable Higgs bundle of exponent $\leq 2$. Then we can construct inductively a Higgs-de Rham sequence with leading term $(E,\theta)$ such that every Higgs bundle term is Higgs semistable. Hence $(E,\theta)$ is a strongly semistable bundle.
\end{proof}

\section{Tensor Product of Strongly semistable Higgs Bundles}

In characteristic $0$ case, the tensor product of two semistable vector bundles is still semistable by Kobayashi-Hitchin correspondence. The Kobayashi-Hitchin correspondence in the setting of Higgs bundles has been generalized in a number of ways starting with the far reaching one by C. Simpson \cite{Simpson92} and \cite{Simpson94}. Thus the Higgs semi-stability of tensor product of two semistable Higgs bundles follows from the Higgs bundles version of the Kobayashi-Hitchin correspondence.

In the characteristic $p>0$ case, tensor product of two semistable vector bundles need not be semistable. However,
S. Ilangovan, V. B. Mehta, A. J. Parameswaran \cite{IlangovanMehtaParameswaran03} showed that if $E_1$ and $E_2$ are semistable vector bundles with $\rk(E_1)+\rk(E_2)\leq p+1$, then $E_1\otimes E_2$ is still a semistable vector bundle. Later, V. Balaji, A. J. Parameswaran \cite{BalajiParameswaran11} generalized the theorem of Ilangovan-Mehta-Parameswaran to the Higgs bundles case, and proved the following tensor product theorem for semistable Higgs bundles.
\begin{Theorem}\cite[Theorem 8.16]{BalajiParameswaran11}
Let $k$ be an algebraically closed field of characteristic $p>0$, $X$ a smooth projective curve over $k$. Let $(E_1,\theta_1)$ and $(E_2,\theta_2)$ be semistable Higgs bundles on $X$ with $\det(E_i)\cong\Ox_X$, $i=1,2$. Suppose that $\rk(E_1)+\rk(E_2)\leq p+1$. Then the tensor product $(E_1\otimes E_2,\theta_1\otimes 1+1\otimes \theta_2)$ is also a semistable Higgs bundle.
\end{Theorem}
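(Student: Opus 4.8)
The plan is to reduce the statement to a semistability estimate for principal Higgs bundles under a low-height representation, following the strategy of Ilangovan--Mehta--Parameswaran for ordinary bundles and its Higgs refinement. Since $\det(E_i)\cong\Ox_X$, I would attach to the pair $(E_1,E_2)$ the principal $G$-Higgs bundle of frames, with $G=SL_{n_1}\times SL_{n_2}$ and $n_i=\rk(E_i)$, and note that $(E_1\otimes E_2,\theta_1\otimes 1+1\otimes\theta_2)$ is exactly the Higgs bundle associated to the tensor product representation $\rho\colon G\to GL(V)$, $V=k^{n_1}\otimes k^{n_2}$, $(g_1,g_2)\mapsto g_1\otimes g_2$. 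The theorem then becomes the assertion that a semistable $G$-Higgs bundle remains semistable after pushing out along $\rho$, under the numerical hypothesis $n_1+n_2\le p+1$.

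The key steps are as follows. First, develop the Harder--Narasimhan/canonical-reduction theory in the category of Higgs sheaves, so that semistability of the $G$-Higgs bundle is equivalent to the absence of a $\theta$-invariant reduction to a proper parabolic of positive degree; it is essential that the canonical reduction be taken among $\theta$-invariant subsheaves. Second, argue by contradiction: if the tensor product Higgs bundle is Higgs-unstable, its maximal destabilising Higgs subsheaf is canonical, hence functorial, and therefore produces a reduction of the associated $GL(V)$-Higgs bundle to a proper parabolic $P$ together with a dominant one-parameter subgroup $\lambda$, in the sense of Kempf--Rousseau, that measures the instability. Third, transport the numerical data of $\lambda$ back to $G$ and compare the instability of the tensor product with those of the two factors.

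The hypothesis enters through a height bound. The $\lambda$-weights on $V$ are the pairwise sums of the $\lambda$-weights on $k^{n_1}$ and on $k^{n_2}$, so the spread of the weights of $\rho$ is at most $(n_1-1)+(n_2-1)=n_1+n_2-2\le p-1<p$. For a representation of height $<p$ one has the char-$p$ analog of the Ramanan--Ramanathan estimate: the $\lambda$-filtration of $V$ is the tensor of the $\lambda$-filtrations of the factors, with no Frobenius twisting, so that the instability degree of the destabilising reduction of $E_1\otimes E_2$ is bounded by a nonnegative combination of the instability degrees of the induced $\theta_i$-invariant reductions of $E_1$ and $E_2$. Since each $(E_i,\theta_i)$ is semistable, these are $\le 0$, forcing the tensor product to be Higgs-semistable after all.

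I expect the main obstacle to be precisely the char-$p$ content of the third step. In positive characteristic tensor products of semistable bundles can be unstable, and the only thing that rescues additivity of the instability is that height $<p$ keeps the relevant Weyl modules irreducible and the Frobenius kernel out of the picture, so that the instability is \emph{separable}. Making this rigorous in the Higgs setting requires verifying that the Kempf--Rousseau one-parameter subgroup and its parabolic reduction are compatible with the Higgs fields, i.e. that the whole instability analysis stays inside the category of Higgs sheaves; this compatibility, rather than the numerics, is the heart of the matter and is what Balaji--Parameswaran establish. An alternative route internal to the present paper, though only conditional, is to note that a degree-$0$ semistable Higgs bundle which is strongly semistable corresponds under Lan--Sheng--Zuo to a crystalline representation, and that Theorem \ref{TensorTheorem} keeps tensor products strongly semistable; this needs the Lan--Sheng--Zuo conjecture and is exactly the content of Corollary \ref{TensorStableBundle}.
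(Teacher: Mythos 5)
This statement is quoted in the paper from \cite{BalajiParameswaran11}; the paper itself offers no proof of it, explicitly describing the Balaji--Parameswaran argument as ``very complicated'' and instead proving a modified statement by entirely different means. Your proposal reconstructs, accurately in outline, the route of the cited source: pass to the principal $(SL_{n_1}\times SL_{n_2})$-Higgs bundle of frames, view $E_1\otimes E_2$ as the pushout along the tensor representation, observe that the height of that representation is $n_1+n_2-2\leq p-1$, and run the Kempf--Rousseau/Ramanan--Ramanathan instability analysis inside the category of Higgs sheaves to contradict semistability of the factors. The paper's own tensor result (Theorem \ref{TensorTheorem}) takes a much more elementary path with no group-theoretic input: over $\overline{\mathbb{F}}_p$ and for degree $0$, strong Higgs semistability is equivalent to quasi-periodicity of the Higgs--de Rham flow (Lemma \ref{Stongly-Quasiperiodic}), the inverse Cartier transform $C^{-1}$ is a tensor functor, and Hodge filtrations tensor, so the flow of $E_1\otimes E_2$ is the tensor product of the two flows and inherits quasi-periodicity; the hypothesis $\rk(E_1)+\rk(E_2)\leq p+1$ enters only to keep the nilpotency exponent below $p$ so that $C^{-1}$ applies. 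What your route buys is the full strength of the cited theorem (any algebraically closed $k$, semistable rather than strongly semistable inputs, $\det\cong\Ox_X$); what the paper's route buys is simplicity, at the cost of restricting to $\overline{\mathbb{F}}_p$, degree $0$, strongly semistable inputs, and, for the unconditional semistable version (Corollary \ref{TensorStableBundle}), dependence on the Lan--Sheng--Zuo conjecture.

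As a standalone proof your sketch does have a real gap, which you yourself flag: the assertion that the instability one-parameter subgroup and its parabolic reduction can be taken compatible with the Higgs fields, and that the resulting numerical comparison of instability degrees goes through without Frobenius twisting, is precisely the technical core of \cite{BalajiParameswaran11} and is not established by the height computation alone. In positive characteristic the canonical (Harder--Narasimhan) reduction need not behave functorially under nonseparable phenomena, and verifying that the whole Kempf--Rousseau apparatus stays $\theta$-equivariant is where the work lies; deferring it to the reference means the proposal is a correct roadmap rather than a proof.
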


The approach of V. Balaji, A. J. Parameswaran \cite{BalajiParameswaran11} is very complicated. In this section, we will give another more simple approach to prove a suitable modification of tensor product theorem for strongly semistable Higgs bundles.

In this section, unless otherwise explicitly declared, $k$ is an algebraic closure of finite fields of characteristic
$p>0$, and $X$ a smooth projective curve over $k$.

\begin{Lemma}\cite[Theorem 2.5]{LanShengZuo12ii}\label{Stongly-Quasiperiodic}
A Higgs bundle $(E,\theta)$ with trivial Chern classes is quasi-periodic if and only if $(E,\theta)$ is strongly Higgs semistable.
\end{Lemma}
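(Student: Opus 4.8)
The plan is to prove the two implications separately; the hypothesis $k=\overline{\mathbb{F}}_p$ will be used through the finiteness of $\mathbb{F}_q$-rational points of a scheme of finite type over a finite field. For the implication \emph{quasi-periodic $\Rightarrow$ strongly Higgs semistable} I would argue by slope amplification, in the spirit of Lange--Stuhler. If $(E,\theta)$ is quasi-periodic it is the leading term of a Higgs-de Rham sequence that becomes periodic, so only finitely many distinct Higgs terms $(E_i,\theta_i)$ occur, and since $C^{-1}$ and the grading functor preserve the rank and the triviality of the Chern classes, each $(E_i,\theta_i)$ has degree $0$. Were some $(E_i,\theta_i)$ unstable, its maximal destabilizing Higgs subsheaf would have strictly positive slope; because $C^{-1}$ is an equivalence carrying sub-Higgs-sheaves to $\nabla$-invariant subsheaves and, being built from a Frobenius pull-back, scales the degree of a proper subobject by $p$, such a positive slope is amplified by a factor of $p$ at each application of $C^{-1}$. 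This is incompatible with the sequence entering a periodic cycle, whose finitely many terms have uniformly bounded maximal slope. Hence every Higgs term is semistable and $(E,\theta)$ is strongly Higgs semistable in the sense of Definition \ref{Def:StronglySemistableBundle}.

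For the converse \emph{strongly Higgs semistable $\Rightarrow$ quasi-periodic} the plan is a pigeonhole argument resting on boundedness. Fix a Higgs-de Rham sequence with leading term $(E,\theta)$ whose Higgs terms are all semistable, and at each stage record the pair $((E_i,\theta_i),Fil_i)$ formed by the semistable Higgs term and the Hodge filtration used to pass to $(E_{i+1},\theta_{i+1})=Gr_{Fil_i}C^{-1}(E_i,\theta_i)$. Semistable Higgs bundles of fixed rank with trivial Chern classes on $X$ form a bounded family, and the admissible filtrations vary in the relative flag scheme over it, again of finite type. Granting that the entire collection of data is defined over a single finite field $\mathbb{F}_q$, there are only finitely many isomorphism classes of such pairs, so $((E_i,\theta_i),Fil_i)\cong((E_j,\theta_j),Fil_j)$ for some $i<j$. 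As the flow is deterministic once the filtrations are fixed, this isomorphism propagates and the sequence is periodic of period $j-i$ from the $i$-th step onward, so $(E,\theta)$ is quasi-periodic in the sense of Definition \ref{Def:Quasi-PeriodicBundle}.

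The main obstacle is exactly the clause ``defined over a single finite field'' used above. The pair $(X,(E,\theta))$ descends to some $\mathbb{F}_q$, and the inverse Cartier transform is canonical for the fixed lifting data, so $C^{-1}(E_i,\theta_i)$ remains over $\mathbb{F}_q$ whenever $(E_i,\theta_i)$ does; but each Hodge filtration realizing a semistable grading is a \emph{choice}, a priori only available after a finite extension of the base field, and naively these extensions could grow without bound along the sequence and defeat the finiteness. The heart of the proof is to rule this out: one must show that the locus of semistabilizing Hodge filtrations is cut out inside the flag scheme over $\mathbb{F}_q$, and that the whole Higgs-de Rham flow is Frobenius-equivariant, so that the orbit of an $\mathbb{F}_q$-rational datum stays inside the finite set of $\mathbb{F}_{q^N}$-rational data for one fixed $N$. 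Securing this uniform bound on the field of definition, rather than the ensuing pigeonhole, is where the genuine difficulty lies.
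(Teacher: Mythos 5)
The paper itself offers no proof of this lemma: it is quoted directly from Lan--Sheng--Zuo \cite[Theorem 2.5]{LanShengZuo12ii}, so there is no internal argument to compare yours against; I can only judge the proposal on its merits. Your first implication (quasi-periodic $\Rightarrow$ strongly Higgs semistable) is essentially the standard slope-amplification argument and is sound, but you elide one step: after $C^{-1}$ multiplies the degree of the maximal destabilizing Higgs subsheaf by $p$, the flow still passes through the grading functor, so you must check that for a $\nabla$-invariant subsheaf $W\subseteq H$ the induced object $Gr_{Fil\cap W}(W)\subseteq Gr_{Fil}(H)$ is a $\theta$-invariant subsheaf of the same rank and degree (it is, by Griffiths transversality and additivity of degree in short exact sequences). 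Only then do you get $\mu_{max}(E_{i+1})\geq p\,\mu_{max}(E_i)$, so that instability at any stage forces $\mu_{max}$ to grow without bound, contradicting the eventual recurrence of finitely many isomorphism classes.

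The second implication is where the proposal genuinely falls short, and you say so yourself. The pigeonhole scheme is the right skeleton, but the statement it rests on --- that the isomorphism classes of all the Higgs terms $(E_i,\theta_i)$ lie among the $\mathbb{F}_{q^N}$-points of a fixed finite-type moduli space for a single $N$ --- is exactly the hard content and is not supplied: since each $Fil_i$ is an auxiliary choice, Galois equivariance of the flow is not automatic, and the fields of definition can a priori grow with $i$. The way this is closed in the literature is to make the choice canonical: Lan--Sheng--Zuo (see \cite{LanShengZuo13i}, \cite{LanShengZuo13ii}) construct a distinguished Simpson-type filtration on $C^{-1}(E_i,\theta_i)$ by iterating Harder--Narasimhan modifications, which is Galois-equivariant and hence keeps the entire flow over one finite field; identifying and proving the existence of such a canonical (or at least uniformly bounded) choice is the missing idea, not a technicality. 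A smaller correction: once you find $(E_i,\theta_i)\cong(E_j,\theta_j)$ with $i<j$, you should not claim the given sequence ``propagates'' periodically, since the originally chosen $Fil_j,Fil_{j+1},\dots$ need not correspond to $Fil_i,Fil_{i+1},\dots$ under the isomorphism; instead redefine the sequence from step $j$ onward by transporting the earlier choices, which is legitimate because quasi-periodicity only requires the existence of some eventually periodic Higgs-de Rham sequence with the given leading term.
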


One can easily induce the following tensor theorem for strongly semistable bundle by the equivalence of strongly semistable Higgs bundles and quasi-periodic Higgs bundles.

\begin{Theorem}\label{TensorTheorem}
Let $k$ be the algebraic closure of finite fields of characteristic $p>0$, and $X$ a smooth projective curve over $k$. Let $(E_1,\theta_1)$ and $(E_2,\theta_2)$ be strongly semistable Higgs bundles on $X$ with degree $0$. Suppose that $\rk(E_1)+\rk(E_2)\leq p+1$. Then the tensor product $(E_1\otimes E_2,\theta_1\otimes 1+1\otimes \theta_2)$ is also a strongly semistable Higgs bundle.
\end{Theorem}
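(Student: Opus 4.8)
The plan is to prove Theorem \ref{TensorTheorem} by exploiting the equivalence between strongly semistable Higgs bundles and quasi-periodic Higgs bundles established in Lemma \ref{Stongly-Quasiperiodic}, reducing the tensor product statement to a periodicity statement that propagates through the Higgs--de Rham sequence. First I would observe that, since $k$ is an algebraic closure of finite fields, each strongly semistable Higgs bundle $(E_i,\theta_i)$ of degree $0$ is quasi-periodic by Lemma \ref{Stongly-Quasiperiodic}; after passing to a finite field of definition and a common index, both $(E_1,\theta_1)$ and $(E_2,\theta_2)$ become genuinely periodic Higgs--de Rham sequences with the \emph{same} period $f$ (one simply takes a common multiple of the two individual periods, using that a periodic sequence of period $f$ is also periodic of period $mf$). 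Thus I may assume I have two periodic Higgs--de Rham sequences over a finite field, and the goal becomes showing that their tensor product is again quasi-periodic, i.e.\ strongly semistable.

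The central mechanism is that the inverse Cartier transform $C^{-1}$ and the grading operation $Gr_{Fil}$ are compatible with tensor products. Concretely, if $(H_i,\nabla_i)=C^{-1}(E_i,\theta_i)$ are the de Rham bundles appearing in the two sequences, then the tensor product connection on $H_1\otimes H_2$ is the inverse Cartier transform of the tensor product Higgs field, so that
\begin{equation*}
C^{-1}(E_1\otimes E_2,\ \theta_1\otimes 1+1\otimes\theta_2)\cong (H_1\otimes H_2,\ \nabla_1\otimes 1+1\otimes\nabla_2).
\end{equation*}
Equipping $H_1\otimes H_2$ with the tensor product (convolution) filtration $Fil^n=\sum_{a+b=n} Fil^a_{H_1}\otimes Fil^b_{H_2}$, which satisfies Griffiths transversality because each factor does, I would check that the associated graded of the tensor product is the tensor product of the associated gradeds. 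Iterating this along the two synchronized periodic sequences produces a Higgs--de Rham sequence for $(E_1\otimes E_2,\theta_1\otimes 1+1\otimes\theta_2)$ that is itself periodic of period $f$. The one genuine analytic input needed at each Higgs stage is the semistability of the tensor product of the two semistable Higgs terms: here the rank hypothesis $\rk(E_1)+\rk(E_2)\leq p+1$ enters, and I would invoke exactly the tensor product theorem \cite[Theorem 8.16]{BalajiParameswaran11} (equivalently Ilangovan--Mehta--Parameswaran in the underlying bundle case) to guarantee that each tensor product Higgs bundle in the constructed sequence remains semistable.

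I expect the main obstacle to be the \emph{compatibility of the filtrations and gradings across the periodicity iteration} rather than the single-step tensor compatibility of $C^{-1}$. The subtlety is that periodicity requires not merely that the tensor sequence returns to an isomorphic Higgs bundle after $f$ steps, but that the Hodge filtrations chosen at each stage of the tensor sequence are precisely the convolution filtrations built from the two factor filtrations, so that the isomorphism identifying the leading term with the $f$-th term is induced by the tensor product of the two factor isomorphisms. Verifying that the convolution filtration is the correct Hodge filtration---i.e.\ that its grading is again semistable and that it closes up the period---demands that at each stage the tensor product of semistable Higgs bundles be semistable, which is where the rank bound is indispensable; without it the convolution filtration's grading could fail to be semistable and the chosen sequence would not qualify as a Higgs--de Rham sequence with semistable Higgs terms. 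Once periodicity of the tensor sequence is established, Lemma \ref{Stongly-Quasiperiodic} immediately yields that $(E_1\otimes E_2,\theta_1\otimes 1+1\otimes\theta_2)$ is strongly Higgs semistable, completing the proof.
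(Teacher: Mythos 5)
Your skeleton is the same as the paper's: pass to quasi-periodic Higgs--de Rham sequences via Lemma \ref{Stongly-Quasiperiodic}, use that $C^{-1}$ is a tensor functor together with the convolution filtration $Fil^n=\sum_{a+b=n}Fil^a\otimes Fil^b$ to show $Gr\circ C^{-1}$ of the tensor product is the tensor product of the next terms, iterate to get a quasi-periodic tensor sequence, and conclude by Lemma \ref{Stongly-Quasiperiodic} again. Where you diverge is in the step you call the ``one genuine analytic input'': you invoke \cite[Theorem 8.16]{BalajiParameswaran11} to guarantee that each tensor-product Higgs term in the constructed sequence is semistable. This is not needed, and it is the one place where your proposal goes against the grain of the argument. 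Definition \ref{Def:Quasi-PeriodicBundle} asks only that the sequence become periodic; it does not require the Higgs terms to be semistable, so establishing quasi-periodicity of the tensor sequence requires no semistability check whatsoever --- the semistability (indeed strong semistability) of $(E_1\otimes E_2,\theta_1\otimes 1+1\otimes\theta_2)$ is then an output of the equivalence in Lemma \ref{Stongly-Quasiperiodic}, not an input. Your worry that the sequence ``would not qualify as a Higgs--de Rham sequence with semistable Higgs terms'' conflates Definition \ref{Def:StronglySemistableBundle} with Definition \ref{Def:Quasi-PeriodicBundle}. Importing Balaji--Parameswaran here also partly defeats the purpose of the section, whose stated aim is a simpler route around that machinery and whose Corollary \ref{TensorStableBundle} rederives a tensor theorem of that type from the present result.

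Relatedly, you misidentify where the hypothesis $\rk(E_1)+\rk(E_2)\leq p+1$ enters. Its role is not to feed a semistability-of-tensor-products theorem but to control nilpotency: the exponent of a nilpotent Higgs field is less than the rank, so the exponent of $\theta_1\otimes 1+1\otimes\theta_2$ is at most $(\rk(E_1)-1)+(\rk(E_2)-1)\leq p-1$, which is exactly what is needed for the inverse Cartier transform to be defined on the tensor product (and on every subsequent term of the tensor sequence, whose ranks and exponents do not grow). Once you delete the appeal to \cite{BalajiParameswaran11} and reassign the rank bound to this exponent estimate, your argument coincides with the paper's.
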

\begin{proof}
Since the exponent of a nilpotent Higgs bundle less than its rank. Therefore, $\rk(E_1)+\rk(E_2)\leq p+1$ implies that $(E_1,\theta_1)\otimes(E_2,\theta_2)$ and $(E_i,\theta_i)(i=1,2)$ are nilpotent Higgs bundles of exponent $\leq p-1$. By Lemma \ref{Stongly-Quasiperiodic}, we have quasi-periodic Higgs-de Rham sequences $\{(E^{(i)}_j,\theta^{(i)}_j),(H^{(i)}_j,\nabla^{(i)}_j,Fil^{(i)}_j)\}_{j\in\Z_{\geq 0}}$ such that $(E^{(i)}_0,\theta^{(i)}_0)\cong(E_i,\theta_i)$, $i=1,2$.
Since inverse Cartier transform $C^{-1}$ preserve tensor structures, we have $$C^{-1}((E^{(1)}_0,\theta^{(1)}_0)\otimes(E^{(2)}_0,\theta^{(2)}_0))\cong C^{-1}(E^{(1)}_0,\theta^{(1)}_0)\otimes C^{-1}(E^{(2)}_0,\theta^{(2)}_0)=(H^{(1)}_0,\nabla^{(1)}_0)\otimes(H^{(2)}_0,\nabla^{(2)}_0).$$
Then there is a Hodge filtration $Fil_0$ on $(H^{(1)}_0,\nabla^{(1)}_0)\otimes(H^{(2)}_0,\nabla^{(2)}_0)$ such that $$Gr_{Fil_0}((H^{(1)}_0,\nabla^{(1)}_0)\otimes(H^{(2)}_0,\nabla^{(2)}_0))\cong Gr_{Fil^{(1)}_0}(H^{(1)}_0,\nabla^{(1)}_0)\otimes Gr_{Fil^{(2)}_0}(H^{(2)}_0,\nabla^{(2)}_0).$$
Thus we have $Gr\circ C^{-1}((E^{(1)}_0,\theta^{(1)}_0)\otimes(E^{(2)}_0,\theta^{(2)}_0))\cong(E^{(1)}_1,\theta^{(1)}_1)\otimes(E^{(2)}_1,\theta^{(2)}_1)$.
Hence we can construct inductively a Higgs-de Rham sequence with Higgs bundles terms $\{(E^{(1)}_j,\theta^{(1)}_j)\otimes(E^{(2)}_j,\theta^{(2)}_j)\}_{j\in\Z_{\geq 0}}$. As $\{(E^{(i)}_j,\theta^{(i)}_j),(H^{(i)}_j,\nabla^{(i)}_j,Fil^{(i)}_j)\}_{j\in\Z_{\geq 0}}(i=1,2)$ are quasi-periodic Higgs-de Rham sequences,
then $\{(E^{(1)}_j,\theta^{(1)}_j)\otimes(E^{(2)}_j,\theta^{(2)}_j)\}_{j\in\Z_{\geq 0}}$ is also a quasi-periodic Higgs bundles sequence. Hence $$(E_1,\theta_1)\otimes(E_2,\theta_2)=(E_1\otimes E_2,\theta_1\otimes 1+1\otimes \theta_2)$$
is a strongly semistable Higgs bundles by Lemma \ref{Stongly-Quasiperiodic}.
\end{proof}

As a consequence from the tensor product theorem for strongly semistable Higgs bundles, one can reprove the tensor product theorem for semistable Higgs bundles if the Lan-Sheng-Zuo conjecture is true.
\begin{Corollary}\label{TensorStableBundle}
Suppose that Lan-Sheng-Zuo conjecture is true, i.e. any nilpotent semistable Higgs bundle is strongly Higgs semistable. If $(E_1,\theta_1)$ and $(E_2,\theta_2)$ are nilpotent semistable Higgs bundles on $X$ with degree $0$ and $\rk(E_1)+\rk(E_2)\leq p+1$, then the tensor product $(E_1\otimes E_2,\theta_1\otimes 1+1\otimes \theta_2)$ is also Higgs semistable.
\end{Corollary}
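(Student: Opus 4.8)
The plan is to deduce the statement by chaining the assumed Lan-Sheng-Zuo conjecture with the tensor theorem for strongly semistable Higgs bundles (Theorem \ref{TensorTheorem}), which has already been established unconditionally over an algebraic closure of finite fields. The only genuine work is to check that the hypotheses line up; once the two input Higgs bundles are upgraded to strongly semistable ones, the conclusion is immediate.

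First I would verify the exponent bound that licenses the use of the conjecture. Since $\rk(E_1)+\rk(E_2)\leq p+1$, each factor satisfies $\rk(E_i)\leq p$, and because the nilpotency exponent of a nilpotent Higgs field is strictly smaller than the rank of the underlying bundle, the exponent of each $(E_i,\theta_i)$ is at most $p-1<p$. Thus the Lan-Sheng-Zuo conjecture, assumed to hold, applies to both $(E_1,\theta_1)$ and $(E_2,\theta_2)$ and shows that each of them is strongly Higgs semistable.

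Next, I would note that $\deg(E_i)=0$ means $E_i$ has trivial Chern classes on the curve $X$, so the two factors are now strongly semistable Higgs bundles of degree $0$ satisfying $\rk(E_1)+\rk(E_2)\leq p+1$. Theorem \ref{TensorTheorem} then yields directly that the tensor product $(E_1\otimes E_2,\theta_1\otimes 1+1\otimes \theta_2)$ is again strongly Higgs semistable.

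Finally, I would conclude by unwinding Definition \ref{Def:StronglySemistableBundle}: a strongly Higgs semistable bundle is by definition the leading term of a Higgs-de Rham sequence all of whose Higgs terms are Higgs semistable, and the leading term is itself such a term, hence it is Higgs semistable. Applying this to the tensor product gives the claim. I do not expect any essential obstacle: all of the analytic content already resides in Theorem \ref{TensorTheorem} and in the assumed conjecture, so this corollary is really a bookkeeping consequence; the only point requiring any care is the exponent estimate that permits invoking the conjecture in the first step.
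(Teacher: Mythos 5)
Your proposal is correct and follows exactly the route the paper intends: the paper gives no written proof for this corollary, presenting it as an immediate consequence of Theorem \ref{TensorTheorem} once the conjecture upgrades each factor to a strongly semistable Higgs bundle, with the final step being the observation that the leading term of a Higgs--de Rham sequence with semistable Higgs terms is itself semistable. Your explicit check of the exponent bound via $\rk(E_i)\leq p$ is the same bookkeeping the paper performs inside the proof of Theorem \ref{TensorTheorem}, so nothing is missing.
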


\noindent\textbf{Acknowledgments:} I did the first part of this work during my stay at Johannes Gutenberg Universit\"{a}t Mainz, Germany, as a visiting fellow in 2012. I would like to express my hearty thanks to Professor Kang Zuo, who introduced this subject to me and told me the idea how to prove the Theorem \ref{TensorTheorem}. This paper would not be possible without his help and discussions. Thanks are also due to both Professor Mao Sheng and Gui-Tang Lan for many helps and effective discussions. I also wish to thank the hospitality of Institut f\"{u}r Mathematik, Johannes Gutenberg Universit\"{a}t Mainz, Germany.

\end{document}